\def\NN{\mathbb{N}}
\def\RR{\mathbb{R}}
\def\XX{\mathbb{X}}
\def\JJ{\mathbb{J}}
\def\BB{L^{\infty}}
\def\MM{M^{\infty}}
\def\PP{\mathcal{P}}
\def\Leb{\mu^{\text{Leb}}}
\theoremstyle{plain}
\newtheorem{thm}{Theorem}[section]
\newtheorem{lem}[thm]{Lemma}
\newtheorem{prop}[thm]{Proposition}
\theoremstyle{remark}
\newtheorem{rema}[thm]{Remark}
\theoremstyle{definition}
\newtheorem{defin}[thm]{Definition}
\title{On the Poisson equation for Metropolis-Hastings chains}
\author{Aleksandar Mijatovi\'{c}}
\address{Department of Mathematics, King's College London, UK}
\email{aleksandar.mijatovic@kcl.ac.uk}
\author{Jure Vogrinc}
\address{Department of Mathematics, Imperial College London, UK}
\email{j.vogrinc13@imperial.ac.uk}
\keywords{Poisson equation for Markov chains, variance reduction, Metropolis-Hastings algorithm, 
Central Limit Theorem, asymptotic variance, Markov Chain Monte Carlo, weak approximation}
\subjclass[2000]{60J10, 60J22}
\thanks{We thank Petros Dellaportas for suggesting the problem, Sean Meyn for helpful discussions and 
the referees for useful comments that improved the paper. AM was partially supported by the EPSRC grant EP/P003818/1.}
\begin{document}

\begin{abstract}
This paper defines an approximation scheme for a solution of the Poisson equation of 
a geometrically ergodic Metropolis-Hastings chain $\Phi$.  
The scheme is based on the idea of weak approximation and gives rise to a natural
sequence of control variates for the ergodic average
$S_k(F)=(1/k)\sum_{i=1}^{k} F(\Phi_i)$,
where 
$F$
is the force function in the Poisson equation. The main results show that the
sequence of the asymptotic variances (in the CLTs for the control-variate estimators) 
converges to zero and give a rate of this convergence. Numerical examples in the case
of a double-well potential are discussed. 
\end{abstract}

\maketitle

\section{Introduction}
 
A Central Limit Theorem (CLT) for an ergodic average
$S_k(F)=\frac{1}{k}\sum_{i=1}^{k} F(\Psi_i)$
of a Markov chain 
$(\Psi_k)_{k\in\NN}$, evolving 
according to a transition kernel $\PP$
on a general state space 
$\mathcal{X}$,
is well-known to be intimately linked with 
the solution 
$\hat F$
of the 
\textit{Poisson equation}
\begin{equation}\label{PE}
\hat{F}-\PP\hat{F}= F-\pi( F) 
\tag{PE($\PP$, $ F$)}
\end{equation}
with a \emph{force function} 
$F\colon \mathcal{X}\to\RR$
(see~\cite[Sec.17.4]{tweedie}). 
Here $\pi$ is the invariant probability measure of $\Psi$ on $\mathcal{X}$,
$\pi(F)=\int_{\mathcal{X}} F(x)\pi(dx)$ 
and
$\PP  G(x)=\mathbb{E}_x[G(\Psi_{1})]$ 
for any $G:\mathcal{X}\to\RR$.
In fact, the Poisson equation in~\eqref{PE}
is of fundamental importance in many areas of probability, statistics and
engineering 
(see~\cite[Sec.17.7, p.459]{tweedie}).
In this context one of the main motivations for constructing approximations to $\hat F$ is to reduce 
the asymptotic variance in~\eqref{CLT} for the Markov Chain Monte Carlo (MCMC) estimators, thus speeding up the MCMC algorithms.

Assume that the random sequence 
$(S_k(F))_{k\in\NN}$
satisfies the strong law of large numbers (SLLN),
$\lim_{k\to\infty} S_k(F)=\pi(F)$ a.s., 
and the CLT
\begin{equation}\label{CLT}
\sqrt{k}\left(S_k(F)-\pi( F)\right)\overset{d}{\longrightarrow}\sigma_ F\cdot N(0,1)\qquad \text{as $k\to\infty$,}
\tag{CLT($\Psi$, $ F$)}
\end{equation}
where $N(0,1)$
is a standard normal distribution and the constant 
$\sigma_ F^2$
is the \emph{asymptotic variance}.
Put differently, the variance of $S_k(F)$ is 
approximately equal to $\sigma_F^2/k$. 
It is hence intuitively clear that if $\sigma_F^2$ is large, which occurs in applications particularly when
$F$ has super-linear growth (as $\sigma_F^2\propto\mathrm{Var}_\pi(F)$, 
see e.g.~\cite[Sec.5]{roberts} and the references therein), the variance of 
$S_k(F)$ will also be big, requiring a large number of steps $k$ for convergence.  
In contrast, 
imagine we knew the solution $\hat F$ of the Poisson equation~\eqref{PE} 
and could evaluate the function 
$\PP\hat{F}-\hat{F}$.
Then the estimator given by the ergodic average 
$S_k(F+\PP\hat{F}-\hat{F})$ (for any $k\in\NN$) would 
be equal to the constant $\pi(F)$ for any (not necessarily stationary) path of
the chain $\Psi$, i.e.
its variance vanishes for $\pi$-a.e. starting point. 
However, solving Poisson's equation for the chains arising in most applications, even
for very simple functions $F$, is for all practical purposes impossible (see
e.g. relevant comments in~\cite{henderson}). Nevertheless, this line of reasoning suggests the
following heuristic: 

\smallskip

\noindent \emph{a good approximation $\tilde F$
to a solution of~\eqref{PE} significantly reduces the asymptotic variance in
the}~\hyperref[CLT]{(CLT($\Psi$, $ F+\PP \tilde F-\tilde F$))}, \emph{i.e.} 
$\sigma^2_{F+\PP \tilde F-\tilde F}\ll\sigma_ F^2$.

\smallskip


This heuristic is well known and strongly substantiated with numerical evidence. As a method of variance reduction it has been developed in various
Markovian settings~\cite{andradottir, henderson, HendersonGlynn02, MeynTadic}. 
Its applications in stochastic networks theory
are described in~\cite[Ch.~11]{Meyn_Control}, 
while applications in statistics 
for the random scan Gibbs sampler 
were developed in~\cite{petros}. 
However,
schemes for constructing $\tilde{F}$ found in the literature (a)~depend strongly on the structure of the underlying model and,
to the best of our knowledge,
(b)~there are no theoretical results quantifying~\textit{a~priori} the amount of reduction in the asymptotic variance 
of~\hyperref[CLT]{CLT($\Psi$, $ F+\PP \tilde F-\tilde F$)}. 
This paper aims to address both (a) and (b) by introducing a general~\hyperref[alg1]{\textbf{Scheme}} (see below) 
for constructing an approximate solution $\tilde{F}$
to~\eqref{PE}, applicable to any discrete time Markov chain,
and analysing the corresponding asymptotic variance in the setting of Metropolis-Hastings chains.

\smallskip

\begin{algorithm}[H]
\begin{center}
\textbf{Scheme}
\end{center}
 \KwIn{Transition kernel $\PP$, function $F$, 
allotment $\XX=(\JJ,X)$ 
consisting of a partition $\JJ=\{J_0,J_1,\dots,J_m\}$ of $\mathcal{X}$ and representatives 
$X=\{a_j\in J_j:j=0,1,\dots,m\}$.}
\Begin{
(I) Define $p_{\XX}\in\RR^{(m+1)\times (m+1)}$ 
and $f_{\XX}\colon\{a_0,a_1,\dots a_m\}\to \RR$ 
respectively  by
$$(p_{\XX})_{ij}:=\PP(a_i,J_j)\quad\text{and} \quad
f_{\XX}(a_j):=F(a_j),\quad \text{ where $i,j\in\{0,1,\dots,m\}$.}$$
(II) Find a solution $\hat f_{\XX}$ of Poisson's equation~\hyperref[PE]{(PE($p_{\XX}$,$f_{\XX}$))}. \\
(III) Define $\tilde{F}_{\XX} :=\sum_{j=1}^m  \hat{f}_{\XX}(a_j)1_{J_j}$.}
\KwOut{Approximate solution $\tilde{F}_{\XX}:\mathcal{X}\to\RR$ to Poisson's equation in~\eqref{PE}.} 
\label{alg1}
\end{algorithm}

\smallskip

Our main result (Theorem~\ref{thetheorem} below)
states that,  
for an appropriately 
chosen allotment $\XX$, 
the function $\tilde{F}_{\XX}$ 
can theoretically achieve an arbitrary
reduction of the asymptotic variance for a class of Metropolis-Hastings
chains and force functions $F$ that satisfy natural growth conditions. 
To the best of our knowledge, this is the first systematic approach capable of reducing the asymptotic
variance arbitrarily for a general class of discrete-time Markov chains. 
The proof hinges on the uniform convergence to stationarity of a sequence of approximating 
Markov chains, which in turn crucially depends on the results in~\cite{meyn,baxendale} (see Section~\ref{subsec:Overview} below for details).
Step (II) in the \hyperref[alg1]{Scheme} amounts to solving a linear system and
can be carried out provided that the stochastic matrix $p_{\XX}$ is
irreducible. Moreover, Poisson's
equation~\hyperref[PE]{(PE($p_{\XX}$,$f_{\XX}$))} has a solution that is unique
up to the addition of a constant function (see \cite[Theorem~9.3]{makowski}).
Furthermore, the asymptotic variance in~\hyperref[CLT]{CLT($\Psi$, $F+\PP \tilde{F}_{\XX}-\tilde F_{\XX}$)} 
does not depend on the choice of $\hat{f}_{\XX}$ in step (II) of the \hyperref[alg1]{\textbf{Scheme}}.

The approximation~\hyperref[alg1]{\textbf{Scheme}} exploits the stochastic evolution implicitly present 
in~\eqref{PE}. As in~\cite{HendersonGlynn02, MeynTadic,Meyn_Control}, 
we are using the true solution of the Poisson equation for a 
related Markov process to construct $\tilde{F}$.
In our context, the approximation of $\hat F$ is based on 
the \textit{weak approximation} of the chain $\Psi$
by a sequence of ``simpler'' finite state Markov chains (converging in law to $\Psi$), 
such that the solutions of the Poisson equations for the approximating
chains can be characterised algebraically. The approximating Markov chain underpinning the~\hyperref[alg1]{\textbf{Scheme}}  
mimics the  behaviour of $\Psi$ as follows: its state space is a partition
$\{J_0,J_1,\dots,J_m\}$ of the state space $\mathcal{X}$
and its transition matrix consists of the 
probabilities of $\Psi$ jumping from a chosen element in $J_i$
into the set $J_j$. Analogous weak approximation ideas
have been applied in continuous time to Brownian
motion~\cite{M07}, L\'evy~\cite{MVJ} and Feller~\cite{MP} processes.
A recent interesting approach for approximating the solution of Poisson's equation in discrete 
time has been proposed in~\cite{Poisson_App_Derivative}. The idea is to solve the
equation obtained by differentiating both sides of~\eqref{PE} in the state variable. This leads to 
a new approximation method for $\hat F$ but appears to
require smoothness properties of the transition kernel, not afforded by the class of 
Metropolis-Hastings chains.  

The approximation of a given Markov chain with a finite-state chain given by the~\hyperref[alg1]{\textbf{Scheme}} is akin to others previously mentioned in the literature that are also based on a partition or a covering of the state space, see for instance \cite{runnenburg62,hoekstra84,rosenthal92} and \cite{MadrasRandall02}. These papers relate the speed of convergence to equilibrium of the initial and of the approximating Markov chains. They do not however address potential similarity of Poisson's equations.

Theorem~\ref{thetheorem} is theoretical in nature as the partition in $\XX$ that provably reduces the variance below a prescribed level 
typically requires a large number of approximating states $m$.  However, Example~\ref{subsec:Example2} in Section~\ref{subsec:Example} below demonstrates numerically
that in the case of a Random walk Metropolis chain converging to a double-well potential,
the~\hyperref[alg1]{\textbf{Scheme}} applied with only $m=6$ points 
reduces the variance by approximately 10\%
(see Section~\ref{section6} below for details).

A natural question arising from Theorem~\ref{thetheorem} is about the rate of the decay
of the sequence of asymptotic variances  $\sigma^2_n\to0$. Theorem~\ref{rate}
shows that
the decay is governed by the greater of the two quantities: the mesh of the partition 
of the bounded set $\RR^d\setminus J^n_0$ and the $\pi$-average 
of the squared drift function of the chain over $J^n_0$
(see Section~\ref{section2} for  definitions).
Furthermore, for the chains studied in~\cite{roberts2, jarner},
Theorem~\ref{rate} implies a bound on the rate of decay in terms of the target
density $\pi$ alone
(see Proposition~\ref{vrana} below). 
We hope this result is both of some practical value 
(cf. Section~\ref{subsec:Example1}) and independent interest.

The reminder of the paper is organised as follows. Section~\ref{section2} 
formulates our main result (Theorem~\ref{thetheorem}). 
In Section~\ref{section5} we prove Theorem~\ref{thetheorem}. 
The structure of the proof is given in Section~\ref{subsec:Overview},
while Sections~\ref{subsec:Bounding},~\ref{subsec:Uniform} and~\ref{subsec:ApproximateSolution} carry out the steps.
In Section~\ref{sec:Rate} we state and prove Theorem~\ref{rate} and 
Proposition~\ref{vrana}, bounding the rate of convergence to zero of the asymptotic variances. Section~\ref{section6}
describes the implementation of the~\hyperref[alg1]{\textbf{Scheme}}  (Section~\ref{subsec:implementation}) and gives numerical examples
(Section~\ref{subsec:Example}).

\section{Assumptions and the main result}\label{section2}

Let $\pi$ be a density function of a probability measure on $\RR^d$ with respect to the Lebesgue measure $\Leb$ 
and let $q\colon \RR^d\times\RR^d\to\RR$ be a transition density function, i.e. for every
$x\in\RR^d$, the function $y\mapsto q(x,y)$ is a density on $\RR^d$. 
The idea behind the dynamics of a Metropolis-Hastings chain is to propose a move 
from a density $q(x,\cdot)$ to a new location, say $y$, and accept it 
with probability 
$$
\alpha(x,y):=
\begin{cases}
\min\left(1,\frac{\pi(y)q(y,x)}{\pi(x)q(x,y)}\right),& \pi(x)q(x,y)> 0,\\
1,& \pi(x)q(x,y)=0.
\end{cases}
$$
The Markov transition kernel $P(x,dy)$ for this dynamics is given by the formula 
\begin{equation}\label{kernelform}
P(x,dy):= 
\alpha(x,y)q(x,y)dy+\left(1-\int_{\RR^d}{\alpha(x,z)q(x,z)dz}\right)\delta_x(dy),
\tag{MH($q$, $\pi$)}
\end{equation}
where $\delta_x$ is Dirac's measure centred at $x$,
and the Markov chain $(\Phi_k)_{k\in\NN}$ generated by $P$ is known as the \emph{Metropolis-Hastings chain} (see~\cite{metropolis,hastings}).
In this context, $\pi$ is termed a \emph{target density} and $q$ a \emph{proposal density}.
It is easy to see that 
the chain $\Phi$ is reversible (i.e. it satisfies $\pi(x)dxP(x,dy)=\pi(y)dy P(y,dx)$) and hence 
stationary
(i.e. $\int_{\RR^d} P(x,dy)\pi(x)dx=\pi(y)dy$) with respect to
$\pi$.
The measure $\pi(x)dx$ is also known as the \textit{invariant probability measure} for the chain $\Phi$. 
Throughout the paper we assume that the kernel $P$ in~\hyperref[kernelform]{$\RWM(q,\pi)$} satisfies the following assumptions:
\begin{enumerate}
\item[A1:]\label{A1} \textit{Geometric drift condition:} 
there exists a continuous  fucntion
$V\colon\RR^d\to [1,\infty)$, such that
$\pi(V^2)<\infty$, $V$ has bounded sublevel sets (i.e. 
$V^{-1}\left(\left[1,c\right] \right)$ is bounded for every $c\geq1$) 
and
\begin{equation}
P V(x)\leq \lambda_V V(x)+\kappa_V 1_{C_V}(x)\text{,}\hspace{10pt}\text{for all }x\in\RR^d\text{,}\nonumber
\end{equation}
for constants 
$\lambda_V\in(0,1)$, $\kappa_V>0$ 
and a compact set
$C_V\subset \RR^d$. 
\item[A2:] \label{A2} The target density $\pi\colon\RR^d\to(0,\infty)$ is continuous and strictly positive.
\item[A3:] \label{A3} The proposal density $q\colon \RR^d\times\RR^d\to(0,\infty)$ is continuous, strictly positive and  bounded.
\end{enumerate}

Associated with the \emph{drift function} $V$ is the function space
\begin{equation}\label{BV}
\BB_V:=\left\{G\colon\RR^d\to\RR\text{; $G$ measurable and } ||G||_V<\infty\right\},\quad\text{where}\quad
||G||_V:=\sup_{x\in\RR^d}\frac{|G(x)|}{V(x)}.
\end{equation}
Note that 
$\BB_V$
equipped with the norm
$||\cdot||_V$ is a Banach space (see~\cite[Proposition~7.2.1]{lasserre}). 

\begin{rema}
\label{A3remark}
\begin{enumerate}[(i)]
\item Assumptions \hyperref[A1]{A1}-\hyperref[A3]{A3} are standard. 
Widely used classes of 
Random walk Metropolis chains (i.e. $q(x,y)=q^*(y-x)$) 
satisfying~\hyperref[A1]{A1}-\hyperref[A3]{A3} 
are given in~\cite{mengersen, roberts2, jarner}. 
See also~\cite{robertsmala} for examples of Metropolis Adjusted Langevin
chains satisfying~\hyperref[A1]{A1}-\hyperref[A3]{A3}. 

\item For Metropolis kernel $P$ satisfying \hyperref[A1]{A1}-\hyperref[A3]{A3}
and $F\in\BB_V$ there exists a solution $\hat{F}$ to \hyperref[PE]{PE($P$, $ F$)} 
that is an element of $\BB_V$. The solution $\hat{F}$ is unique up to the addition
of a constant function (see \cite[Prop~1.1 and Thm~2.3]{lyapunov}).

\item Assumptions \hyperref[A2]{A2} and \hyperref[A3]{A3} imply that
Metropolis-Hastings chain $\Phi$ driven by
$P$ is $\pi$-irreducible (i.e $\Leb$-irreducible), 
strongly aperiodic and positive Harris recurrent (see~\cite[Lem~1.1\&1.2]{mengersen},~\cite[Thm~1, Cor~2]{tierney}  
and monograph~\cite{tweedie} as a general reference).
In particular,  
the SLLN~\cite[Thm~17.1.7]{tweedie}
and the CLT~\cite[Thm~17.4.4]{tweedie} hold for $F\in\BB_V$.  

\item If $\pi(V)<\infty$ but $\pi(V^2)=\infty$, we may work with $\sqrt{V}$ instead of $V$,  
as Jensen's inequality implies $P(\sqrt{V})\leq \sqrt{\lambda_V} \sqrt{V}+\sqrt{\kappa_V}1_{C_V}$,
thus restricting our results to force functions $F\in\BB_{\sqrt{V}}$.

\item Geometric drift condition~\hyperref[A1]{A1} implies that 
for $G\in\BB_V$ we have
$\pi(G^2)<\infty$,
$P G(x)$ 
is well defined for any $x\in\RR^d$,
$PG\in\BB_V$  and $\pi(P G-G)=0$.  
In particular, \hyperref[CLT]{CLT($\Phi$, $F+PG-G$)} holds with mean $\pi(F)$ 
and (possibly substantially reduced) asymptotic variance $\sigma_{F+P G-G}^2$. 
\end{enumerate}
\end{rema}

Remark~\ref{A3remark}(v) motivates the following definition. 

\begin{defin}\label{approx} Let $\Phi$ be a Metropolis-Hastings chain driven by
kernel $P$ in~\hyperref[kernelform]{$\RWM(q,\pi)$}. 
Let $(G_n)_{n\in\NN}$ be a sequence in $\BB_V$
with the asymptotic variance $\sigma^2_n$
in the~\hyperref[CLT]{CLT($\Phi$, $ F+ P G_n-G_n)$}. 
We say that $(G_n)_{n\in\NN}$ \emph{asymptotically solves Poisson's equation} \hyperref[PE]{PE($P$, $ F$)} if
$\lim_{n\to\infty} \sigma^2_n=0$.
\end{defin}

\begin{rema}
\begin{enumerate}[(a)]
\item If $(G_n)_{n\in\NN}$ asymptotically solves Poisson's equation \hyperref[PE]{PE($P$,$F$)},
so does $(G_n+c_n)_{n\in\NN}$ for any sequence $(c_n)_{n\in\NN}$ of real numbers. 
\item Definition~\ref{approx} does not require the Metropolis-Hastings structure on $\RR^d$ and 
can be extended trivially to Markov chains on general state spaces satisfying an appropriate CLT. 
\end{enumerate}
\end{rema}

We now define
a sequence of functions that asymptotically solves Poisson's equation \hyperref[PE]{PE($P$, $ F$)}.
\begin{defin}
\label{allotmentdef}
\begin{itemize}
\item[\textbf{(a)}] 
Let $\mathbb{J}$ be a partition of $\RR^d$ into measurable sets
$J_0,J_1,\dots,J_m$, such that $\cup_{j=1}^m J_j$ is bounded and $\Leb(J_j)>0$
holds for all $0\leq j\leq m$. Let $X=\{a_0,a_1,\dots,a_m\}$ be a set of
\emph{representatives}: $a_j\in J_j$ for all $0\leq j\leq m$.  
The pair $\XX:=(\mathbb{J},X)$ 
is called an \emph{allotment} and $m$ be the \emph{size} of the allotment
$\XX$.

\item[\textbf{(b)}]
Let $W\colon \RR^d\to [1,\infty)$ be a measurable function and $\XX$ an allotment. \emph{$W$-radius} and \emph{$W$-mesh} of 
the allotment $\XX$ are defined by 
\begin{eqnarray}\label{radiusdef}
\text{rad}(\XX,W) & := & \inf_{y\in J_0} W(y),\\
\delta(\XX,W) & := & \max\left(\max_{1\leq j\leq m}\sup_{y\in J_j}|y-a_j|,\max_{0\leq j\leq m}\sup_{y\in J_j}
(W(a_j)/W(y)-1)\right),
\label{meshdef}
\end{eqnarray}
respectively,
where $|x|$ denotes the Euclidean norm of any $x\in\RR^d$.

\item[\textbf{(c)}]
A sequence of allotments
$\left(\XX_{n}\right)_{n\in\NN}$ is \emph{exhaustive} with respect to the function $W$ 
in \textbf{(b)}
if the following holds:
$\lim_{n\to\infty}\text{rad}(\XX_{n},W)=\infty$ and
$\lim_{n\to\infty}\delta(\XX_{n},W)= 0$.
\end{itemize}
\end{defin}

\begin{rema}
\begin{enumerate}[(i)]
\item For any continuous function $W\colon \RR^d\to [1,\infty)$ with
bounded sublevel sets, there exists an exhaustive sequence of allotments (see Appendix~\ref{appendix} below).

\item
Note that $J_0$ is the only unbounded set in the partition of an allotment $\XX$. 
For the $W$-radius of
$\XX$
to be large, the union $\cup_{j=1}^{m}J_j$ of all the bounded sets in the partition
has to cover the part of $\RR^d$ where $W$ is small.

\item
The $W$-mesh is a maximum of two quantities:
the first is a standard mesh of the partition 
$\{J_1,\dots,J_m\}$
of the bounded set
$\RR\setminus J_0=\cup_{j=1}^m J_j$. The
second quantity in~\eqref{meshdef}
implies that for the $W$-mesh to be small, representatives
$a_j$ have to be chosen so that $W(a_j)$ and $\inf_{y\in J_j}W(y)$ are close
to each other, relative to size of $W$ on $J_j$.  
Intuitively, if $W(a_0)$ is close to $\inf_{y\in J_0}W(y)$ 
and $W$ is continuously differentiable, 
then the second
term in~\eqref{meshdef} is approximately equal to
$$\max_{1\leq j\leq m}\sup_{y\in J_j}\left((\nabla\log W (y))^{\top}(y-a_j)\right)\text{.}$$
Thus, if $W$ does not exhibit super-exponential growth, the representatives $a_1,\ldots,a_m$  can be chosen arbitrarily.
\end{enumerate}
\end{rema}

We can now state our main result.

\begin{thm}\label{thetheorem}
Let the transition kernel $P$ in~\hyperref[kernelform]{$\RWM(q,\pi)$} of a Metropolis-Hastings chain $\Phi$ satisfy
\hyperref[A1]{A1}-\hyperref[A3]{A3} for a drift function $V$. Let $F\in\BB_V$ be continuous $\pi$-a.e. and let
$\left(\XX_n=(\mathbb{J}_n,X_n)\right)_{n\in\NN}$ be an exhaustive sequence of allotments with
respect to $V$, where $\JJ_n=\{J_0^n,\ldots,J_{m_n}^n\}$ and $X_n=\{a^n_j\in J^n_j:j=0,1,\dots,m_n\}$. 
For each $n\in\NN$, let $\tilde{F}_n$ be the output of the~\hyperref[alg1]{\textbf{Scheme}} 
with input $P$, $F$ and $\XX_n$. Then 
the sequence $(\tilde{F}_n)_{n\in\NN}$ asymptotically solves Poisson's
equation~\hyperref[PE]{PE($P$,$F$)}, i.e.
the asymptotic variance $\sigma^2_n$ in~\hyperref[CLT]{CLT($\Phi$, $ F+P\tilde{F}_n-\tilde{F}_n$)}
converges to zero as $n\to\infty$. 
\end{thm}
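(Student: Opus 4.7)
The plan is to reduce $\sigma_n^2\to 0$ to an $L^2(\pi)$-convergence statement for Poisson solutions of a sequence of approximating kernels, and then establish the latter via weak approximation together with uniform geometric ergodicity. By Remark~\ref{A3remark}(ii), let $\hat F\in\BB_V$ denote the solution of~\eqref{PE}. Since $\hat F-P\hat F=F-\pi(F)$, the function $\hat F-\tilde F_n+c$ solves Poisson's equation for the control-variate force $F+P\tilde F_n-\tilde F_n-\pi(F)$ for every $c\in\RR$. Using the standard representation $\sigma^2=\pi(\hat R^2)-\pi((P\hat R)^2)\le \pi(\hat R^2)$ for the asymptotic variance of a reversible chain, I obtain
$$
\sigma_n^2\;\le\;\inf_{c\in\RR}\pi\bigl((\hat F-\tilde F_n-c)^2\bigr),
$$
so it suffices to show $\hat F-\tilde F_n\to\mathrm{const}$ in $L^2(\pi)$.

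Second, the~\hyperref[alg1]{\textbf{Scheme}} admits a natural lift to $\RR^d$: define the approximating kernel $\tilde P_n$ by $\tilde P_n(x,\cdot):=P(a_j^n,\cdot)$ for $x\in J_j^n$, the piecewise-constant force $F_n:=\sum_{j=0}^{m_n}F(a_j^n)1_{J_j^n}$, and the piecewise-constant candidate $\bar F_n:=\sum_{j=0}^{m_n}\hat f_{\XX_n}(a_j^n)1_{J_j^n}$. Since $(p_{\XX_n})_{ij}=\tilde P_n(a_i^n,J_j^n)$, a direct computation shows $\bar F_n$ solves PE($\tilde P_n$, $F_n$). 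Furthermore, $\bar F_n$ and $\tilde F_n$ agree outside $J_0^n$; since $V\ge\mathrm{rad}(\XX_n,V)\to\infty$ on $J_0^n$ while $\pi(V^2)<\infty$ and $\bar F_n,\tilde F_n\in\BB_V$, the discrepancy on $J_0^n$ is negligible in $L^2(\pi)$. Analogously $F_n\to F$ in $L^2(\pi)$ by the $V$-mesh condition, $\pi$-a.e. continuity of $F$, and dominated convergence.

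The third and most delicate step is to establish uniform geometric ergodicity for the family $(\tilde P_n)$. The $V$-mesh condition controls the ratio $V(a_j^n)/V(y)$ on each bounded cell $J_j^n$, allowing the drift inequality $PV\le\lambda_VV+\kappa_V1_{C_V}$ to transfer to $\tilde P_n V\le\lambda V+\kappa 1_C$ with slightly inflated constants once the allotment is fine enough; inside $J_0^n$ the drift transfers from the single representative $a_0^n$ up to a mesh error that is harmless because $V$ is already large there. The strict positivity and continuity of $q$ and $\pi$ in A2–A3 together with compactness provide a uniform minorisation on a common small set for all sufficiently refined $\tilde P_n$. The quantitative bounds of~\cite{baxendale,meyn} then yield $\|\tilde P_n^k(x,\cdot)-\tilde\pi_n\|_V\le M V(x)\rho^k$ with $M,\rho$ independent of $n$, whence $\bar F_n$ is uniformly bounded in $\BB_V$ and $\tilde\pi_n$ is weakly close to $\pi$.

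Finally, the series representation $\hat F-\pi(\hat F)=\sum_{k\ge 0}(P^kF-\pi(F))$ and its analogue $\bar F_n-\tilde\pi_n(\bar F_n)=\sum_{k\ge 0}(\tilde P_n^kF_n-\tilde\pi_n(F_n))$ converge at a geometric rate uniform in $n$ by step three. Termwise convergence $\tilde P_n^kF_n\to P^kF$ in $L^2(\pi)$ follows by induction on $k$ from $F_n\to F$, the mesh-driven pointwise convergence of $\tilde P_n g$ to $Pg$ for continuous $g\in\BB_V$ (using continuity of $x\mapsto P(x,\cdot)$), and the uniform $V$-bound supplying the dominating function. Combining these yields $\bar F_n-c_n\to\hat F-c$ in $L^2(\pi)$; step two transfers this to $\tilde F_n$, and step one concludes. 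I expect the principal obstacle to be step three: transferring the drift to $\tilde P_n$ with $n$-independent constants requires a careful interplay between the continuity of $PV$ and the $V$-mesh bound in Definition~\ref{allotmentdef}(b), and the uniform minorisation needs a compactness argument that degrades controllably as the partition refines.
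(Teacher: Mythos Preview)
Your approach is sound and follows a genuinely different route from the paper's. The paper reduces to showing $\pi(\Delta(\tilde F_n)^2)\to0$, where $\Delta(G):=PG-G+F-\pi(F)$, using the spectral gap of $P$ on $L^2_0(\pi)$; it then bounds $|\Delta(\tilde F_n)(x)|$ by three explicit terms---$|F(x)-F(a^n(x))|$, $|\pi_n(f_n)-\pi(F)|$, and a continuity term for $P\tilde F_n-\tilde F_n$---and shows each vanishes pointwise under a uniform $V$-bound, concluding by dominated convergence. Your reduction $\sigma_n^2\le\inf_c\pi((\hat F-\tilde F_n-c)^2)$ is more elementary (it uses only the martingale variance formula $\sigma^2=\pi(\hat R^2)-\pi((P\hat R)^2)$, not the spectral gap), but it obliges you to prove the \emph{stronger} statement that $\tilde F_n\to\hat F$ up to constants in $L^2(\pi)$, which you attack via the series representation and termwise convergence. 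Both routes hinge on the same uniform geometric ergodicity of the approximating chains; the paper establishes drift, minorisation and strong aperiodicity with $n$-independent constants for the finite-state matrices $p_n$ on $X_n$ rather than for the lifted kernels $\tilde P_n$, which sidesteps the irreducibility issues $\tilde P_n$ has on $\RR^d$ but is otherwise the same argument.

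One point in your step four needs more care. The induction for $\tilde P_n^kF_n\to P^kF$ cannot appeal to ``$\tilde P_ng\to Pg$ for continuous $g$'' as stated, because the iterates $\tilde P_n^kF_n$ are piecewise constant, not continuous. You must split $\tilde P_ng_n-Pg=\tilde P_n(g_n-g)+(\tilde P_n-P)g$ with $g_n=\tilde P_n^kF_n$ and $g=P^kF$; the second piece uses that $P^kF$ is continuous $\pi$-a.e.\ (which it is, since $P$ preserves a.e.\ continuity under A2--A3), while the first requires a dominated-convergence argument against the \emph{varying} kernel $P(a^n(x),\cdot)$---here the domination $\alpha(a^n(x),y)q(a^n(x),y)\le\eta_x\pi(y)$ of the paper's estimate~\eqref{novastvar} is what makes this go through. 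Note also that your series comparison implicitly requires $\tilde\pi_n(F_n)=\pi_n(f_n)\to\pi(F)$; this does follow from termwise pointwise convergence plus the uniform geometric rate (interlace the two bounds at any fixed $x$), but it is exactly the content of the paper's Proposition~\ref{kje2} and is not as cheap as ``$\tilde\pi_n$ is weakly close to $\pi$'' suggests.
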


\begin{rema}\label{pXremark}
Functions $\tilde{F}_n$ from Theorem~\ref{thetheorem} are well defined. This is because all the entries
\begin{equation}\label{pX}
{(p_n)_{ij}:=(p_{\XX_n})_{ij}}=P(a_i^n,J_j^n)=\begin{cases} \int_{J_j^n}\alpha(a_i^n,y)q(a_i^n,y)dy &\text{if}\hspace{10pt} i\neq j \\
1-\int_{\RR^d\setminus J_i^n}\alpha(a_i^n,y)q(a_i^n,y)dy &\text{if}\hspace{10pt} i= j
\end{cases}
\end{equation}
of stochastic matrices $p_n$, constructed by the \hyperref[alg1]{\textbf{Scheme}} with input $P$, $F$ and $\XX_n$, 
are strictly positive by assumptions~\hyperref[A2]{A2},~\hyperref[A3]{A3} 
and Definition~\ref{allotmentdef}\textbf{(a)}
($\Leb(J_j^n)>0$ for all $0\leq j\leq m_n$, where $m_n$ is the size of allotment $\XX_n$). 
Hence the chain on $X_n$, driven by $p_n$, is irreducible, recurrent, aperiodic and admits a
unique invariant probability measure $\pi_n$. Moreover, Poisson's equation for $p_n$ and
any force function on $X_n$ has a solution, unique up to addition
of a constant (see \cite[Theorem~9.3]{makowski}).
\end{rema}

\begin{rema}\label{rem:generalising to non MH kernels}
The proof of Theorem~\ref{thetheorem} does not rely heavily on the structure of Metropolis-Hastings kernels. Emulating the proof appears feasible at least for other specific $T$-chains (see \cite[Chapte~6]{tweedie} for the definition). More specifically, reversibility is needed in Proposition~\ref{l2var}, but an analogous result can be obtained without it. In the proof of Proposition~\ref{uniformdrift} b), we use the fact that the non-Dirac component $T(x,dy)$ of $P(x,dy)$ has positive and continuous density with respect to the Lebesgue measure. Finally, in proofs of Proposition~\ref{kje2} and Theorem~\ref{thetheorem} we require $T(x,dy)$ to exhibit the following form of continuity, $\lim_{n\to\infty}\left\|T(x,\cdot)-T(a^n(x),\cdot)\right\|_V=0$ for $\pi$-a.e. $x$ (here $\|\cdot\|_V$ is the $V$ total variation norm and $a^n(x)=\sum_{j=0}^{m_n}a^n_j1_{J^n_j}(x)$).
\end{rema}

`\section{Proof of Theorem~\ref{thetheorem}}\label{section5}

\subsection{Overview of the proof}
\label{subsec:Overview}

The central object in the proof of Theorem~\ref{thetheorem} is the function
\begin{equation}\label{eq:Delta_n}
\Delta(G):=PG-G+F-\pi(F),
\end{equation}
which measures the failure of a function $G$ to be a solution of the Poisson
equation~\hyperref[PE]{PE($P$,$F$)}. Intuitively, the closer $\Delta(G)$ is to zero the better. 

The proof is in two parts. In the first part (Section~\ref{subsec:Bounding} below)
we show that a sequence of functions $(G_n)_{n\in\NN}$ in $\BB_V$
asymptotically solves Poisson's equation~\hyperref[PE]{PE($P$,$F$)}
if
$\lim_{n\to\infty}\pi(\Delta(G_n)^2)= 0$. 
This is a simple consequence 
of the representation of the asymptotic variance in terms of the spectral
measure~\cite[Eq~1.1]{KipnisVaradhan86} and the existence of a spectral gap
for geometrically ergodic Markov chains established~\cite[Prop~1.1]{RobertsRosenthal97}.


The second part of the proof is more involved. 
It consists of verifying that functions $(\tilde{F}_n)_{n\in\NN}$, 
defined in Theorem~\ref{thetheorem}, indeed satisfy $\lim_{n\to\infty}\pi(\Delta(\tilde{F}_n)^2)=0$.
The key underlying fact needed for this purpose
is that the family of the approximating finite state Markov chains driven by the stochastic matrices
$(p_n)_{n\in\NN}$ 
converge to their respective stationary distributions $(\pi_n)_{n\in\NN}$
\textbf{uniformly} in $n\in\NN$. This step is facilitated by the results 
in~\cite[Thm~2.3]{meyn} and~\cite[Thm~1.1]{baxendale},
which show that the constants appearing in the geometric ergodicity estimate 
depend only and explicitly on the constants in the drift, minorisation and strong aperiodicity conditions for that chain. 
In Section~\ref{subsec:Uniform} we show that 
these constants can be chosen independently of 
$n\in\NN$ 
(Proposition~\ref{uniformdrift} below) 
and establish the uniform convergence to stationarity
(Proposition~\ref{operatorprop} below). 

In Section~\ref{subsec:ApproximateSolution} 
we establish convergence in $L^2(\pi)$  of the sequence
$(\Delta(\tilde{F}_n))_{n\in\NN}$.
In addition to the uniform convergence to stationarity,
the proof requires a further weak approximation by a family of finite state Markov chains
with stationary distributions that are explicit in the target density $\pi$
(see~\eqref{eq:*_chain} below).
Note that  the stationary laws $\pi_n$ of the chains generated by the stochastic matrices $p_n$, defined in~\eqref{pX}, 
cannot be expressed explicitly in terms of $\pi$.

\begin{rema}\label{rem:notation} \textbf{Auxiliary notation:}
In addition to the notation used in the statement of Theorem~\ref{thetheorem}
and Remark~\ref{pXremark}, throughout the remainder of the section we will use the following objects: 

\begin{itemize}
\item $\hat{F}$: solution of~\hyperref[PE]{PE($P$,$F$)} in $\BB_V$ (\textit{cf}. Remark~\ref{A3remark}(ii)).
\item $f_n$ and $v_n$: restrictions of $F$ and $V$ to the set $X_n$, respectively. 
\item $\hat{f}_n$: solution of~\hyperref[PE]{PE($p_n$,$f_n$)} constructed within the~\hyperref[alg1]{\textbf{Scheme}} (\textit{cf}. Remark~\ref{pXremark}).
\item $\delta_n:=\delta(\XX_n,V)$: the $V$-mesh of the allotment $\XX_n$ defined in~\eqref{meshdef}.
\end{itemize}
\end{rema}

\subsection{Controlling the asymptotic variance} 
\label{subsec:Bounding}
The following proposition gives a sufficient conditions for a sequence of functions
$(G_n)_{n\in\NN}$ to solve asymptotically the Poisson equation.

\begin{prop}\label{l2var}
Let  the sequence $(G_n)_{n\in\NN}$
in $\BB_V$ satisfy $\lim_{n\to\infty}\pi\left(\Delta(G_n)^2\right)=0$.	
Then $(G_n)_{n\in\NN}$ asymptotically solves~\hyperref[PE]{PE($P$, $F$)} in the sense of Definition~\ref{approx}.
\end{prop}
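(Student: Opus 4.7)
The plan is to exploit the spectral-theoretic representation of the asymptotic variance of a reversible Markov chain together with the $L^2$ spectral gap guaranteed by geometric ergodicity, exactly as hinted in the excerpt. First I would observe that, because $\pi(PG_n-G_n)=0$, the centred force function appearing in \hyperref[CLT]{CLT($\Phi$, $F+PG_n-G_n$)} is precisely $\Delta(G_n)$. By Remark~\ref{A3remark}(v) and the inclusion $\BB_V\subseteq L^2(\pi)$ (which uses $\pi(V^2)<\infty$ from A1), every $\Delta(G_n)$ lies in $L^2_0(\pi):=\{H\in L^2(\pi):\pi(H)=0\}$. This is the prerequisite needed to apply the spectral calculus to the self-adjoint operator $P$ acting on $L^2(\pi)$ (self-adjointness being the reversibility of the Metropolis-Hastings kernel).

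Next I would invoke the Kipnis-Varadhan representation cited in the overview: for any $H\in L^2_0(\pi)$ the asymptotic variance in the CLT for the reversible chain $\Phi$ with force $H$ equals
\begin{equation*}
\sigma_H^2=\int_{[-1,1]}\frac{1+\lambda}{1-\lambda}\,d\mu_H(\lambda),
\end{equation*}
where $\mu_H$ is the spectral measure of $H$ with respect to $P|_{L^2_0(\pi)}$. Applied to $H=\Delta(G_n)$, this gives an exact formula for $\sigma_n^2$. The crucial structural input is Proposition~1.1 of~\cite{RobertsRosenthal97}, which states that for a reversible, geometrically ergodic Markov chain the operator $P$ has an $L^2$-spectral gap: there exists $r\in[0,1)$ with $\mathrm{spec}\bigl(P|_{L^2_0(\pi)}\bigr)\subseteq[-r,r]$. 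Geometric ergodicity of $\Phi$ itself follows from the drift condition A1 in the standard way (cf.~\cite[Thm~15.0.1]{tweedie}).

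Combining the two ingredients, on the support of $\mu_{\Delta(G_n)}$ the integrand is bounded by $(1+r)/(1-r)$, hence
\begin{equation*}
\sigma_n^2\;\leq\;\frac{1+r}{1-r}\,\mu_{\Delta(G_n)}([-r,r])\;=\;\frac{1+r}{1-r}\,\pi\bigl(\Delta(G_n)^2\bigr).
\end{equation*}
Since the right-hand side tends to zero by hypothesis and $r$ is a fixed constant independent of $n$, the conclusion $\sigma_n^2\to 0$ follows at once, which is the definition of asymptotically solving \hyperref[PE]{PE($P$, $F$)}.

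No step is a genuine obstacle: the only point requiring care is verifying the applicability of the spectral representation, i.e.\ that $\Delta(G_n)$ is a \emph{centred} $L^2(\pi)$ function; both properties are immediate from $G_n\in\BB_V$, the invariance of $\BB_V$ under $P$, and $\pi(V^2)<\infty$. Everything else is a direct application of two off-the-shelf results, so the proof should be short.
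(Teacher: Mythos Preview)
Your proposal is correct and follows essentially the same approach as the paper: both arguments use the Kipnis--Varadhan spectral representation of the asymptotic variance for reversible chains together with the $L^2$ spectral gap of $P|_{L^2_0(\pi)}$ supplied by Roberts--Rosenthal, yielding the bound $\sigma_n^2\le\frac{1+\rho}{1-\rho}\,\pi(\Delta(G_n)^2)$. Your write-up is slightly more explicit about why $\Delta(G_n)\in L^2_0(\pi)$, but otherwise the two proofs are the same.
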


\begin{proof}
The kernel $P$ is reversible and hence a bounded self-adjoint operator on the Hilbert space $L^2(\pi)$.
Furthermore, the Hilbert subspace $\mathcal{H}:=\{G\in L^2(\pi):\pi(G)=0\}$ is invariant for $P$ (i.e.
$\pi(PG)=0$ for any $G\in\mathcal{H}$). 
By~\eqref{eq:Delta_n} and Remark~\ref{A3remark}(v) it follows that  
$\Delta(G_n)\in\mathcal{H}$ for all $n\in\NN$. 
The asymptotic variance $\sigma^2_n$ in
the~\hyperref[CLT]{CLT($\Phi$, $F+P G_n- G_n)$} can be represented in terms of
a positive (spectral)  measure $E_{\Delta(G_n)}(d\lambda)$ on the spectrum $\sigma(P|_\mathcal{H})\subset\RR$ associated with the function
$\Delta(G_n)$, as follows 
(see \cite{KipnisVaradhan86} and \cite[Thm~2.1]{Geyer92} for details):
\begin{equation}
\label{eq:spect_rep}
\sigma^2_n=\int_{\sigma(P|_\mathcal{H})}\frac{1+\lambda}{1-\lambda}E_{\Delta(G_n)}(d\lambda)\text{.}
\end{equation}
Since the chain generated by $P$ is geometrically ergodic by~\hyperref[A1]{A1},~\cite[Prop~1]{RobertsRosenthal97}
implies that the spectral radius $\rho$ of 
$P|_\mathcal{H}$ 
satisfies $\rho<1$.
Hence the inclusion 
$\sigma(P|_\mathcal{H})\subseteq[-\rho,\rho]$, the equality 
$E_{\Delta(G_n)}(\sigma(P|_\mathcal{H}))=\pi\left(\Delta(G_n)^2\right)$
(see e.g.~\cite[Eq~(2.2)]{Geyer92})
and the formula in~\eqref{eq:spect_rep} imply
$$\sigma^2_n\leq \frac{1+\rho}{1-\rho}\cdot\int_{\sigma(P|_\mathcal{H})}E_{\Delta(G_n)}(d\lambda)=\frac{1+\rho}{1-\rho}\cdot\pi\left(\Delta(G_n)^2\right)\longrightarrow 0\qquad
\text{ as $n\to\infty$.}$$
This proves the proposition.
\end{proof}

\subsection{Uniform convergence to stationarity}
\label{subsec:Uniform}
Fix an exhaustive sequence of
allotments  $(\XX_n)_{n\in\NN}$
and stochastic matrices $p_n$, $n\in\NN$, as in Theorem~\ref{thetheorem}.
The main aim of this section is to 
prove that the corresponding chains are geometrically ergodic uniformly in $n\in\NN$.
This is achieved as follows:  first, 
the \emph{uniform drift, minorisation} and \textit{strong aperiodicity conditions} in~\eqref{Dn},~\eqref{uniformminor} and~\eqref{gammakaca}, respectively,
are established. 
Then, the uniform convergence to stationarity follows from~\cite[Thm~1.1]{baxendale} (cf.~\cite[Thm~2.3]{meyn}). 

For each $n\in\NN$, let $a^n\colon \RR^d\to\RR^d$ 
map $x\in\RR^d$ to its representative in $\XX_n$. 
More precisely, let
\begin{equation}\label{aodx}
a^n(x):=\sum_{j=0}^{m_n} a_j^n 1_{J_j^n}(x)\qquad\text{for every $x\in\RR^d$,}
\end{equation}
where 
$\{J_0^n,\ldots,J_{m_n}^n\}$ is the partition 
and 
$X_n=\{a_0^n,\ldots,a_{m_n}^n\}$
are the representatives
in the allotment $\XX_n$.
Since the sequence of allotments is exhaustive, the following limit holds: 
\begin{equation}\label{aconv}
\lim_{n\to\infty} a^n(x)=x\qquad\text{for every $x\in\RR^d$.}
\end{equation}
Note that the definition of a $V$-mesh (see~\eqref{meshdef} in Definition~\ref{allotmentdef}) implies 
the inequality
\begin{equation}\label{Vbound}
V(a^n(x))=V(a^n(x))-V(x)+V(x)\leq (1+\delta_n)V(x)\qquad \text{for all $n\in\NN$ and $x\in\RR^d$.}
\end{equation}

\begin{prop}\label{uniformdrift}{\textbf{Uniform drift, minorisation and strong aperiodicity conditions.}}
There exists a compact set $C\subset \RR^d$ such that the following statements hold.
\begin{itemize}
\item[\textbf{(a)}]
There exist positive constants $\lambda<1,\kappa$, such that
the uniform drift condition holds:
\begin{equation}\label{Dn}
p_nv_n(a^n_j)\leq \lambda v_n(a^n_j)+\kappa 1_{C}(a^n_j)\hspace{10pt}\text{for all $n\in\NN$, and $a^n_j\in X_n$.}
\end{equation}
\item[\textbf{(b)}]
Define $C_n:= X_n\cap C$,
for each $n\in\NN$. There exist constants
$\gamma,\tilde{\gamma}\in(0,\infty)$ and a measure $\nu_n$, concentrated on $X_n$,
such that the uniform minorisation condition, 
\begin{equation}\label{uniformminor}
(p_n)_{ij}\geq \gamma \nu_n\big(\{a_j^n\}\big)\hspace{10pt}\text{for all $n\in\NN$, and $i,j\in\{0,1,\dots,m_n\}$ satisfying $a^n_i\in C_n$,}
\end{equation}
and the uniform strong aperiodicity condition, 
\begin{equation}\label{gammakaca}
\gamma \nu_n(C_n)\geq \tilde{\gamma}\hspace{10pt}\text{for all $n\in\NN$,}
\end{equation}
hold.
\end{itemize}
\end{prop}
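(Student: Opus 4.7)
The plan is to transfer the drift \hyperref[A1]{A1} and the positivity in \hyperref[A2]{A2}--\hyperref[A3]{A3} from $P$ to the finite matrices $p_n$, exploiting the exhaustiveness of $(\XX_n)_{n\in\NN}$ (so $\delta_n\to 0$ and $\mathrm{rad}(\XX_n,V)\to\infty$). Throughout, my standard move will be to prove the uniform bounds for all $n$ beyond some threshold $N_0$ and then absorb the finitely many smaller $n$ by enlarging the compact set $C$; since the representatives $a_j^n$ with $j\geq 1$ lie in the bounded set $\bigcup_{j\geq 1}J_j^n$ and there are only finitely many $a_0^n$ with $n<N_0$, the set $\bigcup_{n<N_0}X_n$ has compact closure, so this enlargement is harmless.

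\smallskip

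\textbf{Part (a).} Starting from
\[
p_n v_n(a_i^n)=\sum_j V(a_j^n)P(a_i^n,J_j^n)=\int_{\RR^d} V(a^n(y))\,P(a_i^n,dy),
\]
applying \eqref{Vbound} followed by \hyperref[A1]{A1} gives $p_n v_n(a_i^n)\leq(1+\delta_n)(\lambda_V V(a_i^n)+\kappa_V 1_{C_V}(a_i^n))$. Fixing $\lambda\in(\lambda_V,1)$ and choosing $N_0$ with $(1+\delta_{N_0})\lambda_V\leq\lambda$ yields \eqref{Dn} for all $n\geq N_0$ with $C=C_V$ and $\kappa=\kappa_V\sup_n(1+\delta_n)$. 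The remaining $n<N_0$ are covered by enlarging $C$ to contain $\bigcup_{n<N_0}X_n$ and enlarging $\kappa$ above the finite number $\max_{n<N_0,\,j}V(a_j^n)$.

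\smallskip

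\textbf{Part (b).} I would further enlarge $C$ so that it contains a compact core $C_0$ with $C_0+\overline{B}_{\delta^\ast}(0)\subset C$ for some $\delta^\ast>0$ and $\Leb(C_0)>0$. By \hyperref[A2]{A2}--\hyperref[A3]{A3}, $c:=\inf_{C\times C}\alpha(x,y)q(x,y)>0$. Set $\gamma:=c$ and $\nu_n(\{a_j^n\}):=\Leb(J_j^n\cap C)$. For $a_i^n\in C$, one has
\[
(p_n)_{ij}=P(a_i^n,J_j^n)\geq \int_{J_j^n\cap C}\alpha(a_i^n,y)q(a_i^n,y)\,dy\geq c\,\Leb(J_j^n\cap C),
\]
which is \eqref{uniformminor}. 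For \eqref{gammakaca} I would increase $N_0$ so that $\delta_n\leq\delta^\ast/2$ and $\mathrm{rad}(\XX_n,V)>\sup_{C_0}V$ for $n\geq N_0$: the second inequality forces $J_0^n\cap C_0=\emptyset$, and for $j\geq 1$ with $J_j^n\cap C_0\neq\emptyset$ the first one forces $a_j^n\in C_0+\overline{B}_{\delta_n}(0)\subset C$. Hence
\[
\nu_n(C_n)\geq\sum_{\substack{j\geq 1\\ J_j^n\cap C_0\neq\emptyset}}\Leb(J_j^n\cap C_0)=\Leb(C_0),
\]
while for $n<N_0$ the enlarged $C$ contains $X_n$, so $\nu_n(C_n)=\Leb(C)\geq\Leb(C_0)$. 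Taking $\tilde\gamma:=c\,\Leb(C_0)$ completes the proof.

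\smallskip

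The main obstacle I expect is \eqref{gammakaca}: the apparently natural choice $\nu_n(\{a_j^n\}):=\Leb(J_j^n)$ fails because cells whose representatives lie outside $C$ can still cover a large portion of $C$, spoiling the uniform lower bound on $\nu_n(C_n)$. The inner-core/buffer device above is what converts the simultaneous vanishing of $\delta_n$ and divergence of $\mathrm{rad}(\XX_n,V)$ into a lower bound that is genuinely uniform in $n$. Part (a) is essentially a direct transfer of \hyperref[A1]{A1}, and the minorisation \eqref{uniformminor} is routine once $\alpha q$ is bounded below on $C\times C$.
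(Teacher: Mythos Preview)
Your proof is correct and mirrors the paper's: the drift is transferred via \eqref{Vbound}, the minorisation comes from $\inf_{C\times C}\alpha q>0$, and the strong aperiodicity uses an inner-core/buffer device (your $C_0\subset C_0+\overline B_{\delta^\ast}(0)\subset C$ plays exactly the role of the paper's concentric balls $D'\subset D\subset C$). One bookkeeping point to tidy: in (b) you increase $N_0$ \emph{after} having fixed $C$ and $c=\inf_{C\times C}\alpha q$, so the claim ``for $n<N_0$ the enlarged $C$ contains $X_n$'' is not yet justified for $n$ between the old and new thresholds; as your own preamble already indicates, the clean order is to settle $C_0$, $\delta^\ast$ and the final $N_0$ first, then perform a single enlargement of $C$ (to contain both $C_0+\overline B_{\delta^\ast}(0)$ and $\bigcup_{n<N_0}X_n$), and only then define $c,\gamma,\nu_n$.
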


\begin{proof} 
\textbf{(a)} Fix an arbitrary $n\in\NN$ and $j\in\{0,\ldots, m_n\}$. 
By definition of the function $a^n(\cdot)$ in~\eqref{aodx}, we find
$$p_nv_n(a^n_j)-v_n(a^n_j)=\int_{\RR^d} \big(V(a^n(y))-V(a_j^n)\big)\alpha(a^n_j,y)q(a_j^n,y)dy\text{.}$$
By~\eqref{Vbound} we get
$V(a^n(y))-V(a^n_j)\leq V(y)-V(a^n_j)+\delta_nV(y)$ for every $y\in\RR^d$. The form of kernel $P$ 
in~\eqref{kernelform}
and this inequality imply
\begin{align*}
p_nv_n(a^n_j)-v_n(a^n_j)&\leq P V(a^n_j)-V(a^n_j)+\delta_n\int_{\RR^d}V(y)\alpha(a^n_j,y)q(a_j^n,y)dy\\
&\leq  P V(a^n_j)-V(a^n_j)+\delta_n P V(a^n_j)=(1+\delta_n)P V(a^n_j)-V(a^n_j).
\end{align*}
Since by definition $V(a^n_j)=v_n(a^n_j)$, the geometric drift condition in~\hyperref[A1]{A1} implies 
$$p_nv_n(a^n_j)\leq  (1+\delta_n)\lambda_V v_n(a^n_j)+(1+\delta_n)\kappa_V1_{C_V}(a^n_j)\text{.}$$
Since $\lim_{n\to\infty}\delta_n=0$, if we define $ C:=C_V$,
$\lambda:=\frac{1+\lambda_V}{2}$ and
$\kappa:=\kappa_V(1+\sup_{n\in\NN}\delta_n)$, 
there exists $N_0\in\NN$
such that the drift condition in~\eqref{Dn}
holds for all $n\geq N_0$. 
Note that if
we enlarge $C$ and increase $\kappa$,  
the uniform drift condition in~\eqref{Dn}
remains valid for all $n$ it was valid for before the modification.
Finally, if $N_0>1$, we enlarge $C$ 
by all the representatives of the allotments 
$\XX_1,\ldots,\XX_{N_0}$ (finitely many points) and increase $\kappa$ sufficiently,
so that~\eqref{Dn} also holds for
all $n\in\{1,\ldots,N_0-1\}$.

\noindent \textbf{(b)} Recall that by Definition~\ref{allotmentdef}\textbf{(c)},
the sequence $(r_n:=\text{rad}(\XX_n,V))_{n\in\NN}$ tends to infinity, though perhaps not
monotonically.  
Let $D$  be an open ball of radius $r_D>2\sup_{n\in\NN}\delta_n$ in $\RR^d$.  
Since $D$ is a bounded set, by the definition of $V$-radius (see~\eqref{radiusdef}) and Assumption~\hyperref[A1]{A1}, there exists $n_0\in\NN$
such that $D\subseteq \bigcap_{n\geq n_0}V^{-1}\big([1,r_n)\big)$. 
We now enlarge 
the compact set $C$, constructed in part~\textbf{(a)} of this proof, to contain
the bounded set
\begin{equation}\label{uglyset}
\big(\bigcup_{n< n_0}\RR^d\setminus J^n_0\big)
\cup
\bigcap_{n\geq n_0}V^{-1}\big([1,r_n)\big). 
\end{equation}
We may assume the set $C$ is still compact, since the set in~\eqref{uglyset} is bounded,
and hence the uniform drift condition in~\eqref{Dn} still holds. 

Define a measure $\nu$ on the Borel $\sigma$-algebra of $\RR^d$ by
$\nu(B):=\frac{\Leb(B\cap C)}{\Leb(C)}$ for any measurable set $B$.
For each $n\in\NN$, define a measure on the set of representatives $X_n$ 
by $\nu_n(\{a^n_j\}):=\nu(J^n_j)$.
Define the constant $\gamma:=\Leb(C)\inf_{y,x\in C\times C}\alpha(x,y)q(x,y)$
and note that it is strictly positive by 
Assumptions~\hyperref[A2]{A2} and~\hyperref[A3]{A3}
and Definition~\ref{allotmentdef}\textbf{(a)}.
For every $n\in\NN$ and every $0\leq i,j\leq m_n$, such that $a^n_i\in C_n$, 
the form of the kernel $P$ in~\eqref{kernelform}
implies the minorisation condition in~\eqref{uniformminor}:
$$(p_n)_{ij}= P(a^n_i,J^n_j)\geq\int_{J^n_j\cap C} \alpha(a^n_i,y)q(a^n_i,y)dy\geq
\gamma\nu(J^n_j)=\gamma \nu_n\big(\{a_j^n\}\big)\text{.}$$

We now establish the strong aperiodicity condition in~\eqref{gammakaca}. First assume that $n\geq n_0$, let $D'$ be an open ball
of radius $\frac{r_D}{2}$, with the same centre as $D$, and pick $y\in D'$. 
The definition of the $V$-radius $r_n=\text{rad}(\XX_n,V)$ in~\eqref{radiusdef}
implies
$D\cap J^n_0\subseteq  V^{-1}\big([1,r_n)\big)\cap V^{-1}\big([r_n,\infty)\big)$ 
and hence 
$D\cap J^n_0=\emptyset$.
Since the radius $r_D$ of the ball $D$ is strictly greater than $2\sup_{n\in\NN}\delta_n$ and the inequality
$|y-a^n(y)|\leq \sup_{n\in\NN}\delta_n$ holds, it follows that $a^n(y)\in D\subseteq C$. 
Hence, by definition~\eqref{aodx}, it holds that $D'\subseteq\cup_{\{j;a^n_j\in C\}} J^n_j$ and
$$\nu_n(C_n)=\nu_n\left( X_n\cap C\right)=\nu\left(\cup_{\{j;a^n_j\in C\}}
J^n_j\right)\geq\nu(D')=\frac{\Leb(D')}{\Leb(C)}>0\text{.}$$
If $n<n_0$, then it holds that
$C_n=X_n\cap C\supset \{a^n_j:j=1,\ldots,m_n\}$, since 
$C$ 
contains the set in~\eqref{uglyset} and hence 
$\RR^d\setminus J_0^n$.
Therefore we find $\nu_n(C_n)\geq \frac{\Leb(\RR\setminus J_0^n)}{\Leb(C)}>0$.
Hence~\eqref{gammakaca}
holds for the positive constant 
$$\tilde{\gamma}:=
\frac{1}{\gamma}\min\left\{\frac{\Leb(D')}{\Leb(C)},\min_{n<n_0} \frac{\Leb(\RR\setminus J_0^n)}{\Leb(C)}\right\}.
$$ 
This concludes the proof of the proposition.
\end{proof}


Proposition~\ref{uniformdrift} allows us to control the convergence to
stationarity of the approximating chains
uniformly in $n\in\NN$.
In the notation of Theorem~\ref{thetheorem} and Remarks~\ref{pXremark} and~\ref{rem:notation} the following statement holds. 

\begin{prop}\label{operatorprop} 
There exist positive constants $\zeta$ and
$\theta<1$, such that 
the inequality 
\begin{eqnarray*}
\sup_{\|g\|_{v_n}\leq 1} \left|(p_n^kg)(b)-\pi_n(g)\right|  \leq   \zeta\theta^kv_n(b)
\qquad\text{holds for all $b\in X_n$, $k\in\NN\cup\{0\}$ and $n\in\NN$,}
\end{eqnarray*}
where 
the $v_n$-norm 
of a function
$g:X_n\to\RR$
is
$||g||_{v_n}:=\sup_{b\in X_n}|g(b)|/v_n(b)$
and 
$\pi_n(g)$ 
denotes the integral (i.e. weighted sum) of $g$ with respect to $\pi_n$.   
\end{prop}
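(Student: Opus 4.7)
The plan is to deduce the uniform geometric ergodicity of the chains $(p_n)_{n\in\NN}$ from the quantitative convergence theorems of Baxendale~\cite[Thm~1.1]{baxendale} (and of Meyn--Tweedie~\cite[Thm~2.3]{meyn}), whose distinguishing feature is that the rate constants are \emph{explicit functions} of the parameters entering the drift, minorisation and strong aperiodicity conditions. Applied, for each fixed $n$, to the irreducible aperiodic chain on $X_n$ driven by $p_n$, this yields an estimate of the form
\[
\sup_{\|g\|_{v_n}\leq 1}\bigl|(p_n^k g)(b)-\pi_n(g)\bigr|\leq \zeta_n\theta_n^k\,v_n(b),\qquad b\in X_n,\ k\in\NN\cup\{0\},
\]
where $\zeta_n=\zeta_n\bigl(\lambda,\kappa,\gamma,\tilde\gamma,\sup_{a\in C_n}v_n(a)\bigr)$ and $\theta_n=\theta_n(\lambda,\kappa,\gamma,\tilde\gamma)\in(0,1)$ are obtained from Baxendale's formulae and depend only on the listed quantities.

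The second step is to observe that all of these parameters can be controlled uniformly in $n$. Proposition~\ref{uniformdrift} produces a single compact set $C\subset\RR^d$ and constants $\lambda\in(0,1)$, $\kappa,\gamma,\tilde\gamma>0$ for which the drift~\eqref{Dn}, minorisation~\eqref{uniformminor} and strong aperiodicity~\eqref{gammakaca} conditions hold simultaneously for all $n$, with small sets $C_n=X_n\cap C$. Since $v_n=V|_{X_n}$ and $V$ is continuous on the compact set $C$, we also have the uniform estimate
\[
\sup_{n\in\NN}\sup_{a\in C_n}v_n(a)\leq \sup_{x\in C}V(x)<\infty.
\]
Therefore the constants $\zeta:=\sup_n\zeta_n$ and $\theta:=\sup_n\theta_n<1$ are finite and independent of $n$, which is precisely the claim of the proposition.

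The main obstacle is essentially bookkeeping: one must match the hypotheses of~\cite{baxendale} (formulated for general Markov kernels on Polish state spaces with a small set of positive $\nu$-mass) with those supplied by Proposition~\ref{uniformdrift}. Specifically, one checks that the minorisation measure $\nu_n$ enters the bound only through the scalar $\gamma\nu_n(C_n)\geq\tilde\gamma$, so that no other $n$-dependent moment of $\nu_n$ leaks into $\zeta_n$ or $\theta_n$; and that the pointwise minorisation $(p_n)_{ij}\geq\gamma\nu_n(\{a_j^n\})$ for $a_i^n\in C_n$ translates (by summing over $j$ with $a_j^n\in B$) to the standard form $p_n(a,B)\geq\gamma\nu_n(B)$ for $a\in C_n$ and $B\subseteq X_n$ required in Baxendale's setup. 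Both verifications are immediate from the definitions of $p_n$, $\nu_n$ and $C_n$ fixed in Proposition~\ref{uniformdrift}.
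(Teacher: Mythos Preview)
Your proposal is correct and follows essentially the same route as the paper: apply Baxendale's quantitative bound \cite[Thm~1.1]{baxendale} to each $p_n$ and then observe that, by Proposition~\ref{uniformdrift}, the inputs $\lambda,\kappa,\gamma,\tilde\gamma$ are the same for every $n$, so the resulting constants $\zeta,\theta$ are as well. You are in fact slightly more careful than the paper, which does not explicitly mention the possible dependence of $\zeta$ on $\sup_{a\in C_n} v_n(a)$; your observation that this is bounded by $\sup_{x\in C}V(x)<\infty$ (since $C$ is compact and $V$ continuous) is a welcome clarification.
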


\begin{proof}
Pick an arbitrary $n\in\NN$.
According to Proposition~\ref{uniformdrift}, the transition matrix
$p_n$ satisfies the drift condition in~\eqref{Dn}, the minorisation
condition in~\eqref{uniformminor} and the strong aperiodicity condition~\eqref{gammakaca}
with the constants $\kappa,\lambda, \gamma,\tilde{\gamma}$, 
which are independent of the choice of $n$.
Hence,~\cite[Theorem~1.1]{baxendale} (see also~\cite[Theorem~2.3]{meyn}) 
applied to the transition kernel $p_n$ on the state space $X_n$,
yields
$$\sup_{\|g\|_{v_n}\leq1}\left|(p_n^kg)(a^n_j)-\pi_n(g)\right|\leq \zeta(n) v_n(a^n_j)\theta(n)^k$$
for every $k\in\NN\cup\{0\}$, $a^n_j\in X_n$
and constants $\zeta(n)\in(0,\infty)$ and $\theta(n)\in(0,1)$.
Furthermore,~\cite[Theorem~1.1]{baxendale} implies that the constants 
$\zeta(n), \theta(n)$
are only a (chain independent) function of 
$\kappa,\lambda, \gamma,\tilde{\gamma}$
in Proposition~\ref{uniformdrift} and hence do not depend on $n$. 
This concludes the proof.
\end{proof}

\subsection{Functions that asymptotically solve Poisson's equation \hyperref[PE]{PE($P$,$F$)}}
\label{subsec:ApproximateSolution}
In this section we complete the proof of Theorem~\ref{thetheorem}.
By the Dominated Convergence Theorem (DCT), Proposition~\ref{l2var} 
implies that
$(\tilde F_n)_{n\in\NN}$
asymptotically solves~\hyperref[PE]{PE($P$, $F$)}
if the following conditions hold:
\begin{equation}
\label{eq:the_conditions}
\sup_{n\in\NN}||\Delta(\tilde{F}_n)||_{V}<\infty
\qquad\text{and}\qquad
\lim_{n\to\infty}\Delta(\tilde{F}_n)(x)=0\>\> \text{ for $\pi$-a.e. }x\in\RR^d\text{.}
\end{equation}

The inequality in~\eqref{eq:the_conditions} follows from~\eqref{eq:Delta_n} and Proposition~\ref{kje1} below,
which states that the $V$-norm $\tilde F_n$, shifted by an appropriate constant, 
is bounded uniformly in $n\in\NN$. The existence of these constants 
rests on the uniform convergence to stationarity 
in Proposition~\ref{operatorprop} above.

The limit in~\eqref{eq:the_conditions} is established by bounding 
$|\Delta(\tilde{F}_n)|$
by a sum of three non-negative terms (see Lemma~\ref{lem:bound_on_Delta_n} below)
and controlling each one separately. 
The first,
given by
$|F(x)-F(a^n(x))|$,
tends to zero 
by~\eqref{aconv}
since the force function 
$F$ 
is assumed to be continuous $\pi$-a.e.
The second term
$|U(x)-U(a^n(x))|$, where 
$U:=P\tilde{F}_n-\tilde{F}_n$,
is controlled by
Proposition~\ref{kje1} and the DCT.
Controlling the third term 
$|\pi_n(f_n)-\pi(F)|$
is more involved.
It requires constructing a further approximating chain
(based on the transition kernel $P$)
with state space $X_n$ and a transiont matrix $p_n^*$, 
whose invariant distribution can be described analytically
in terms of the density $\pi$
(see~\eqref{eq:*_chain} below).
Proposition~\ref{kje2}, 
whose proof also depends on the uniform convergence to stationarity 
in Proposition~\ref{operatorprop},
establishes the desired limit.
We now give the details of the outlined proof. 


\begin{prop}\label{kje1}
There exists a constant $\xi>0$ and a sequence of real numbers $(c_n)_{n\in\NN}$, 
such that the following inequality 
holds for all $n\in\NN$:
$$||\tilde{F}_n+c_n||_V\leq \xi.
$$
\end{prop}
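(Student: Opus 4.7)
The plan is to select a specific solution $\hat{f}_n$ of~\hyperref[PE]{PE($p_n$,$f_n$)} whose $v_n$-norm can be bounded uniformly in $n$, then transfer this bound to $\tilde{F}_n$ via the $V$-mesh inequality~\eqref{Vbound}. Uniformity will come from Proposition~\ref{operatorprop} combined with the uniform drift of Proposition~\ref{uniformdrift}(a).

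Step (II) of the~\hyperref[alg1]{\textbf{Scheme}} leaves us free to pick any solution, so I would take the centred series
$$\hat{f}_n(a):=\sum_{k=0}^{\infty}\bigl(p_n^k(f_n-\pi_n(f_n))\bigr)(a).$$
Since $\pi_n(f_n-\pi_n(f_n))=0$, Proposition~\ref{operatorprop} yields the geometric bound $|p_n^k(f_n-\pi_n(f_n))(a)|\leq \zeta\theta^k v_n(a)\|f_n-\pi_n(f_n)\|_{v_n}$; summing this shows that the series converges absolutely with $\|\hat{f}_n\|_{v_n}\leq \frac{\zeta}{1-\theta}\|f_n-\pi_n(f_n)\|_{v_n}$, and a telescoping computation confirms $\hat{f}_n$ satisfies~\hyperref[PE]{PE($p_n$,$f_n$)}.

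Next, I would show that $\|f_n-\pi_n(f_n)\|_{v_n}$ is bounded by a constant $M$ independent of $n$. The estimate $\|f_n\|_{v_n}\leq \|F\|_V$ is immediate from $F\in\BB_V$, so it suffices to control $|\pi_n(f_n)|$. Integrating the uniform drift condition~\eqref{Dn} against the invariant distribution $\pi_n$ and using $\pi_n(p_nv_n)=\pi_n(v_n)$ yields $(1-\lambda)\pi_n(v_n)\leq \kappa\pi_n(1_C)\leq\kappa$, hence $\pi_n(v_n)\leq \kappa/(1-\lambda)$ uniformly in $n$. Combining with $|f_n|\leq \|F\|_V v_n$ and $v_n\geq 1$ gives $|\pi_n(f_n)|\leq \|F\|_V\kappa/(1-\lambda)$ and so a uniform constant $M$.

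Finally, since $\tilde{F}_n(x)=\hat{f}_n(a^n(x))$ for $x\in\bigcup_{j\geq 1}J^n_j$ and $\tilde{F}_n\equiv 0$ on $J^n_0$, inequality~\eqref{Vbound} yields $|\tilde{F}_n(x)|\leq \|\hat{f}_n\|_{v_n}V(a^n(x))\leq (1+\delta_n)\|\hat{f}_n\|_{v_n}V(x)$ for all $x\in\RR^d$. Since $\delta_n\to 0$ is bounded, $\|\tilde{F}_n\|_V$ is uniformly bounded, and we may take $c_n=0$ with $\xi:=\tfrac{\zeta M(1+\sup_n\delta_n)}{1-\theta}$. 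The main technical point is the uniform $L^1(\pi_n)$-bound on $v_n$: it crucially exploits the $n$-independence of the drift constants from Proposition~\ref{uniformdrift}(a), without which $\pi_n(v_n)$ could blow up as the approximating state space grows and the argument would break down.
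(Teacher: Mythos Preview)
Your argument is essentially the paper's, with one unnecessary detour and one small slip in handling the constants $c_n$.

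The detour: you apply Proposition~\ref{operatorprop} to $g=f_n-\pi_n(f_n)$ and obtain a bound involving $\|f_n-\pi_n(f_n)\|_{v_n}$, which then forces you to control $|\pi_n(f_n)|$ via the uniform drift estimate $\pi_n(v_n)\leq\kappa/(1-\lambda)$. This is correct but avoidable: applying Proposition~\ref{operatorprop} directly to $g=f_n$ already gives $|p_n^kf_n(b)-\pi_n(f_n)|\leq\zeta\theta^k v_n(b)\|f_n\|_{v_n}$, and since $p_n^kf_n-\pi_n(f_n)=p_n^k(f_n-\pi_n(f_n))$ this yields $\|\bar f_n\|_{v_n}\leq\frac{\zeta}{1-\theta}\|F\|_V$ immediately, with no appeal to the drift constants.

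The slip: the $\tilde F_n$ in the proposition is the output of the Scheme for an \emph{already-made} (arbitrary) choice of $\hat f_n$ in Step~(II); you are not free to re-select it after the fact, so ``take $c_n=0$'' does not address the statement as written. The paper handles this by noting (Remark~\ref{pXremark}) that any two solutions of \hyperref[PE]{PE($p_n$,$f_n$)} differ by a constant, so the Scheme's $\hat f_n$ satisfies $\hat f_n+c_n=\bar f_n$ for some $c_n\in\RR$, and it is this $c_n$ that enters the conclusion. With that one-line adjustment your argument is complete. (Incidentally, the paper's proofs take $\tilde F_n=\sum_{j=0}^{m_n}\hat f_n(a^n_j)1_{J^n_j}$, including $j=0$, so that $\tilde F_n(x)=\hat f_n(a^n(x))$ on all of $\RR^d$; the display in the Scheme omitting $j=0$ appears to be a typo.)
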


\begin{proof}
Pick an arbitrary $n\in\NN$.
Since $F\in\BB_V$ by assumption, its restriction 
$f_n:X_n\to\RR$
satisfies 
$||f_n||_{v_n}\leq ||F||_{V}$ (see Proposition~\ref{operatorprop} for definition of $v_n$-norm).
By Proposition~\ref{operatorprop}, the  function $\bar f_n:X_n\to\RR$,
given by 
$$\bar{f}_n:=
\sum_{k=0}^{\infty}(p^k_nf_n-\pi_n(f_n)),$$ 
is well defined
and satisfies the inequality 
$\left|\left|\bar{f}_n\right|\right|_{v_n}\leq \frac{\zeta}{1-\theta}|| f_n||_{v_n}\leq \frac{\zeta}{1-\theta}||F||_{V}$.
Furthermore,  by~\cite[Thm.~17.4.2]{tweedie}, the function $\bar{f}_n$ solves Poisson's equation~\hyperref[PE]{PE($p_n$,$f_n$)}. 
Since
$\hat{f}_n:X_n\to\RR$, 
in the definition of 
$\tilde{F}_n$, also solves~\hyperref[PE]{PE($p_n$,$f_n$)}, 
by Remark~\ref{pXremark} there exists a constant $c_n\in\RR$ such that 
$\hat{f_n}+c_n=\bar{f_n}$.

Recall that $\tilde{F}_n=\sum_{j=0}^{m_n}\hat{f}_n(a^n_j)1_{J_j^n}$, pick an
arbitrary $x\in\RR^d$ and note that definition~\eqref{aodx} implies  
$\tilde{F}_n(x)=\hat f_n(a^n(x))$.
Hence,  we obtain
\begin{eqnarray*}
\left|\tilde{F}_n(x)+c_n\right| & = & |\bar{f}_n(a^n(x))|  \leq 
\frac{\zeta}{1-\theta}||F||_V v_n(a^n(x))=\frac{\zeta}{1-\theta}||F||_V
V(a^n(x))\\
& \leq & \xi V(x),\qquad\text{where $\xi:=\frac{\zeta}{1-\theta}(1+\sup_{k\in\NN}\delta_k)||F||_V$}
\end{eqnarray*}
and the last inequality follows from~\eqref{Vbound}.
Since both $x\in\RR^d$ and $n\in\NN$ were arbitrary, this implies  
the proposition.
\end{proof}

In order to analyse the behaviour of the limit in~\eqref{eq:the_conditions}, 
we need to define a further approximating Markov chain on $X_n$ with the transition matrix
$p^*_n$ and the invariant measure
$\pi^*_n$, given by
\begin{equation}
\label{eq:*_chain}
(p^*_n)_{ij}:= \int_{J^n_i}\frac{\pi(x)}{\pi(J_i^n)}P(x,J^n_j)dx\quad\text{and}\quad
\pi^*_n\big(\{a^n_j\}\big):=\pi(J_j^n),\quad 
\text{for  $i,j\in\{0,\ldots ,m_n\}$},
\end{equation}
respectively.
Note that
$(p^*_n)_{ij}=\mathbb{P}_\pi[\Phi_1\in J_i^n|\Phi_0\in J_j^n]$,
where $\Phi$ is the Metropolis-Hastings chain we are analysing.
It is clear from the definition in~\eqref{eq:*_chain} that the equality
$\pi^*_n p^*_n=\pi^*_n$ holds.
Furthermore, if we define a function 
$h_n\colon X_n\to\RR$
by 
\begin{equation}
\label{eq:*_chain_2}
h_n(a^n_j):= \int_{J^n_j}\frac{\pi(x)}{\pi(J_j^n)}F(x)dx\quad\text{for $a_j^n\in X_n$, it holds that}\quad \pi^*_n(h_n)=\pi(F).
\end{equation}

\begin{rema}
\label{rem:Duality}
\begin{enumerate}[(i)]
\item Let 
$\mu$
be a signed measure on 
$X_n$
and 
$\|\mu\|_{v_n}:=\sup_{\|g\|_{v_n}\leq1} |\mu(g)|$
its $v_n$-norm, where  
the norm $\|g\|_{v_n}$ was defined in Proposition~\ref{operatorprop}
and
$\mu(g)$ denotes the integral (i.e. weighted sum) of $g:X_n\to\RR$ with respect to 
$\mu$. 
Furthermore, it is natural to define the \textit{dual} normed vector spaces 
$(\BB_{v_n},||\cdot||_{v_n})$ (analogous to $\BB_{V}$ in~\eqref{BV}) and
$(\MM_{v_n},||\cdot||_{v_n})$
of functions on $X_n$ and signed measures 
on $X_n$, respectively.  
Since $X_n$ is finite,
the vector spaces 
$\BB_{v_n}$
and
$\MM_{v_n}$
are isomorphic to 
$\RR^{1+m_n}$.
Furthermore, any linear function
$B:\BB_{v_n}\to\BB_{v_n}$, mapping $g\mapsto Bg$, induces a linear map on the dual
$B^*:\MM_{v_n}\to\MM_{v_n}$, given by $\mu\mapsto B^*\mu:=\mu B$ 
(in this definition we interpret 
$\mu$ as a row vector and $B$ as a matrix).
It is well known that the  operator 
norms coincide $\|B\|_{v_n}=\|B^*\|_{v_n}$.
This fact, 
which holds in a much more general setting (see~\cite[Section~7]{lasserre}),
plays an important role in the proof of Proposition~\ref{kje2}.

\item
The following estimate holds for 
any point $x\in\RR^d$ and all 
$n\in\NN, y\in\RR^d$: 
\begin{equation}\label{novastvar}
\alpha(a^n(x),y)q(a^n(x),y)\leq \frac{q(y,a^n(x))}{\pi(a^n(x))}\pi(y)\leq  \eta_x \pi(y),
\qquad \text{where }
\eta_x:=\frac{\sup_{z,y\in\RR^d} q(z,y)}{\inf_{n\in\NN}\pi(a^n(x))}. 
\end{equation}
By~\eqref{aconv} and \hyperref[A2]{A2} we have 
$0<\inf\{\pi(z):|z-x|\leq \sup_{k\in\NN}\delta_k\}\leq\pi(a^n(x))$, where 
$\delta_k=\delta(\XX_k,V)$ (see Definition~\ref{allotmentdef}), for all sufficiently large $n\in\NN$. Thus, by~\hyperref[A2]{A2} and~\hyperref[A3]{A3}, 
we have $\eta_x\in(0,\infty)$ and the inequalities in~\eqref{novastvar}, which will be used in the proofs of Proposition~\ref{kje2} and Theorem~\ref{thetheorem},
hold. 
\end{enumerate}
\end{rema}

\begin{prop}\label{kje2}
The following inequalities hold
for the measure $\pi_n^*$  defined in~\eqref{eq:*_chain}:
\begin{equation}\label{vinskatrta}
\big|(\pi^*_n-\pi_n)( f_n)\big|
\leq \frac{\zeta ||F||_V}{1-\theta}\left|\left|\pi^*_n-\pi^*_np_n\right|\right|_{v_n},
\end{equation} 
where the constants $\theta\in(0,1)$ and $\zeta>0$ are as in Proposition~\ref{operatorprop},  and 
\begin{eqnarray}\label{2intbound}
\left\|\pi^*_n-\pi^*_np_n\right\|_{v_n} &\leq&
(1+\sup_{k\in\NN}\delta_k)\int_{\RR^d\times\RR^d}  \big(V(y)+V(x)\big)Z_n(x,y)dy\>\pi(x)dx,
\end{eqnarray}
where $Z_n(x,y):=\big|\alpha(a^n(x),y)q(a^n(x),y)-\alpha(x,y)q(x,y)\big|$ for any
$x,y\in\RR^d$ and the function $a^n(\cdot)$ is given in~\eqref{aodx}.
Furthermore, the following limit holds:
$\lim_{n\to\infty}|\pi_n( f_n)-\pi(F)|=0$.
\end{prop}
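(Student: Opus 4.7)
The plan is to establish the three assertions in the order stated, with the algebraic reformulation of $\pi^*_n - \pi^*_n p_n$ underlying~\eqref{2intbound} being the main technical step. For~\eqref{vinskatrta} I would exploit duality: the function $\bar f_n$ from the proof of Proposition~\ref{kje1} solves~\hyperref[PE]{PE($p_n$,$f_n$)}, so $\bar f_n - p_n\bar f_n = f_n - \pi_n(f_n)$. Integrating against the probability measure $\pi^*_n$ yields $(\pi^*_n - \pi_n)(f_n) = (\pi^*_n - \pi^*_n p_n)(\bar f_n)$, and the dual-norm identity from Remark~\ref{rem:Duality}(i) combined with the bound $\|\bar f_n\|_{v_n}\leq\frac{\zeta}{1-\theta}\|F\|_V$ (derived inside the proof of Proposition~\ref{kje1}) gives~\eqref{vinskatrta}.

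For~\eqref{2intbound} I would recast $(\pi^*_n-\pi^*_n p_n)(g)$ as a double integral on $\RR^d\times\RR^d$. Given $g\colon X_n\to\RR$ with $\|g\|_{v_n}\leq 1$, extend it to $\RR^d$ by $\tilde g(x) := g(a^n(x))$; this $\tilde g$ is constant on each $J^n_j$, so $\tilde g(a^n(x)) = \tilde g(x)$. Using $\pi^*_n(\{a^n_j\}) = \pi(J^n_j)$ one computes $\pi^*_n(g) = \int\tilde g(x)\pi(x)\,dx$ and $\pi^*_n p_n(g) = \int P\tilde g(a^n(x))\pi(x)\,dx$; by $P$-stationarity of $\pi$, the former also equals $\int P\tilde g(x)\pi(x)\,dx$. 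Writing $P\tilde g(z) - \tilde g(z) = \int(\tilde g(y)-\tilde g(z))\alpha(z,y)q(z,y)\,dy$ for $z\in\{x,a^n(x)\}$ and using $\tilde g(x) = \tilde g(a^n(x))$ makes the Dirac (rejection) parts of $P(z,\cdot)$ cancel in the difference, producing
\[
(\pi^*_n - \pi^*_n p_n)(g) = \int\pi(x)\int(\tilde g(y) - \tilde g(x))\bigl[\alpha(x,y)q(x,y) - \alpha(a^n(x),y)q(a^n(x),y)\bigr]\,dy\,dx.
\]
The estimate $|\tilde g(y)-\tilde g(x)| \leq V(a^n(y)) + V(a^n(x)) \leq (1+\delta_n)(V(y)+V(x))$ from~\eqref{Vbound}, together with $\delta_n\leq\sup_k\delta_k$ and the supremum over admissible $g$, yields~\eqref{2intbound}.

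For the final limit I would use the triangle inequality $|\pi_n(f_n) - \pi(F)| \leq |(\pi_n-\pi^*_n)(f_n)| + |\pi^*_n(f_n) - \pi(F)|$. The second term is handled by writing $\pi^*_n(f_n) = \int F(a^n(x))\pi(x)\,dx$; since $F$ is continuous $\pi$-a.e. and $a^n(x)\to x$ by~\eqref{aconv}, one has $F(a^n(x))\to F(x)$ for $\pi$-a.e. $x$, and the domination $|F(a^n(x))|\leq(1+\sup_k\delta_k)\|F\|_V V(x)$ with $\pi(V)\leq\pi(V^2)^{1/2}<\infty$ (Assumption~\hyperref[A1]{A1}) justifies the DCT. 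For the first term, inequalities~\eqref{vinskatrta} and~\eqref{2intbound} reduce matters to showing that the double integral in~\eqref{2intbound} tends to zero: pointwise, $Z_n(x,y)\to 0$ by continuity of $\alpha$ and $q$ (Assumptions~\hyperref[A2]{A2}--\hyperref[A3]{A3}) together with $a^n(x)\to x$, while the $y$-integral $\int(V(y)+V(x))Z_n(x,y)\,dy$ is dominated by $PV(x) + PV(a^n(x)) + 2V(x)$ (since $\int V(y)\alpha(z,y)q(z,y)\,dy \leq PV(z)$), which by~\hyperref[A1]{A1} and~\eqref{Vbound} is itself bounded by a $\pi$-integrable function of $x$ uniformly in $n$. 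Two applications of the DCT then complete the argument.

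The hardest step is the middle paragraph: one must notice that because the extension $\tilde g$ is piecewise constant on the cells of the partition $\JJ_n$ one has $\tilde g(x) = \tilde g(a^n(x))$, and hence the Dirac components of the Metropolis-Hastings kernel drop out of $P\tilde g(x) - P\tilde g(a^n(x))$. Without this cancellation the rejection probabilities would contaminate the difference and prevent the clean pointwise bound by $Z_n$ that drives the final DCT step.
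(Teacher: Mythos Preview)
Your proposal is correct and reaches the same conclusions as the paper, but two of the three parts are argued along genuinely different (and somewhat cleaner) lines.

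For~\eqref{vinskatrta} you integrate the Poisson equation $\bar f_n - p_n\bar f_n = f_n - \pi_n(f_n)$ against $\pi^*_n$ and read off the bound directly from $\|\bar f_n\|_{v_n}$. The paper instead builds the resolvent-type operator $B_n=\sum_{k\geq 0}(p_n^k-1\otimes\pi_n)$, proves $\pi^*_n-\pi_n=(\pi^*_n-\pi^*_n p_n)B_n$ via a telescoping identity, and then applies the dual operator-norm bound $\|B_n\|_{v_n}\leq\zeta/(1-\theta)$. Your route is shorter; the paper's route makes the structural fact $\pi^*_n-\pi_n=(\pi^*_n-\pi^*_n p_n)B_n$ explicit, which is of independent interest but not strictly needed here.

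For~\eqref{2intbound} you invoke $P$-stationarity of $\pi$ to write $\pi^*_n(g)=\int P\tilde g(x)\pi(x)\,dx$, whereas the paper routes the same computation through the auxiliary matrix $p^*_n$ and its invariance $\pi^*_n p^*_n=\pi^*_n$. Both land on the identical double-integral representation, and both rely on the same cancellation of the Dirac part coming from $\tilde g(x)=\tilde g(a^n(x))$, which you correctly flag as the crux.

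One point to tighten: for the inner DCT (in $y$, at fixed $x$) you need a dominating function of $y$ that is independent of $n$; the bound $PV(x)+PV(a^n(x))+2V(x)$ you quote controls only the \emph{value} of the $y$-integral, not its integrand. The paper supplies the missing domination via Remark~\ref{rem:Duality}(ii): $\alpha(a^n(x),y)q(a^n(x),y)\leq \eta_x\,\pi(y)$ with $\eta_x<\infty$, so that $(V(y)+V(x))Z_n(x,y)\leq (V(y)+V(x))(\eta_x\pi(y)+\alpha(x,y)q(x,y))$, which is $\Leb$-integrable in $y$ since $\pi(V)<\infty$. You should make this step explicit.
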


\begin{proof}
We estimate the difference 
$|\pi_n( f_n)-\pi(F)|$
using the invariant distribution 
$\pi_n^*$
of the chain 
driven by 
$p_n^*$
and the function 
$h_n$, 
defined in~\eqref{eq:*_chain} and~\eqref{eq:*_chain_2}
respectively,
as follows
\begin{eqnarray}
\nonumber
|\pi_n( f_n)-\pi(F)|&=&|\pi_n( f_n)-\pi^*_n( f_n)+\pi^*_n( f_n)-\pi^*_n(h_n)| \\
&\leq& |(\pi_n-\pi^*_n)( f_n)|+|\pi^*_n( f_n-h_n)|.
\label{decomp}
\end{eqnarray}
We will prove that both terms on the right-hand side  converge to zero as $n\to\infty$.
The definitions of 
$\pi_n^*$ and $h_n$
(in~\eqref{eq:*_chain} and~\eqref{eq:*_chain_2} above)
and the function 
$a^n(\cdot)$
(see~\eqref{aodx})
imply that the second term on the right-hand side of~\eqref{decomp}
takes the form
\begin{eqnarray*}
\pi^*_n( f_n-h_n)&=&\sum_{j=0}^{m_n}\pi(J^n_j)\left(F(a^n_j)-\int_{J^n_j}\frac{\pi(x)}{\pi(J_j^n)}F(x)dx\right)
 =  \int_{\RR^d} \big(F(a^n(x))-F(x)\big)\pi(x)dx\text{.}
\end{eqnarray*}
Since $F$ is continuous $\pi$-a.e.,   the integrand converges to zero $\pi$-a.e. by~\eqref{aconv}.
Furthermore, for any $x\in\RR^d$  it holds that 
\begin{eqnarray*}
\big|F(a^n(x))-F(x)\big| & \leq & \big|F(a^n(x))\big|+\big|F(x)\big| \leq ||F||_V \left(V(a^n(x))+V(x)\right)\\
& \leq & ||F||_V(2+\sup_{k\in\NN}\delta_k)V(x),
\end{eqnarray*}
where the last inequality follows from~\eqref{Vbound}.
Therefore, by the DCT (recall that by the assumption in~\hyperref[A1]{A1} we have $\pi(V)<\infty$), the second term
in~\eqref{decomp} indeed converges to zero.

Establishing the convergence of the first term on the right-hand side in~\eqref{decomp} is more involved. 
We start by establishing the following representation of the signed measure $\pi_n^*-\pi_n$.

\vspace{5pt}

\noindent \textbf{Claim.} There exists a linear map $B_n:\BB_{v_n}\to\BB_{v_n}$, with the dual $B_n^*:\MM_{v_n}\to\MM_{v_n}$, satisfying
$\pi_n^*-\pi_n = B_n^*\left(\pi^*_n-\pi^*_np_n\right) = \left(\pi^*_n-\pi^*_np_n\right)B_n$ 
and $\|B_n^*\|_{v_n}=\|B_n\|_{v_n}\leq \zeta/(1-\theta)$,
where the constants $\theta\in(0,1)$ and $\zeta>0$ are as in Proposition~\ref{operatorprop} 
(see Remark~\ref{rem:Duality}(I) for the definition of $\BB_{v_n}$ and  $\MM_{v_n}$). 

\vspace{5 pt}

Define a transition matrix
$1\otimes\pi_n$
on the state space $X_n$ by
$(1\otimes\pi_n)_{ij}:=\pi_n(a^n_j)$.
The corresponding chain is a sequence of independent r.v.s. with the law given by
$\pi_n$ (independently of the starting distribution).
The inequality in
Proposition~\ref{operatorprop} can therefore be expressed as 
$\|p_n^k-1\otimes \pi_n\|_{v_n}\leq \zeta \theta^k$,
for all $k\in\NN\cup\{0\}$,
implying that 
$B_n:=\sum_{k=0}^{\infty}\left(p^k_n-1\otimes \pi_n\right)$
is a well defined linear map on the normed space $\BB_{v_n}$, such that  
$\|B_n\|_{v_n}\leq \zeta/(1-\theta)$.
In order to establish the first equality in the Claim above, note that 
$\mu (1\otimes\pi_n)=\pi_n$
for any probability measure 
$\mu\in\MM_{v_n}$
and,
by Remark~\ref{rem:Duality}(I) and Proposition~\ref{operatorprop},
the $\|\cdot\|_{v_n}$-norm of the linear operator
$\mu\mapsto \mu (p_n^k-1\otimes \pi_n)$
on 
$\MM_{v_n}$
is bounded above by 
$\zeta \theta^k$
for all 
$k\in\NN$.
In particular, 
$\lim_{k\to\infty}\pi^*_n p^k_n=\pi_n$ in $v_n$-norm
since
$\|\pi^*_n p_n^k-\pi_n\|_{v_n}=\|\pi^*_n(p_n^k-1\otimes \pi_n)\|_{v_n} \leq \zeta \theta^{k}||\pi^*_n||_{v_n}$
for all 
$k\in\NN$.
Consider the identitiy 
$$\left(\pi^*_n-\pi^*_np_n\right)\sum_{k=0}^{\ell}\left(p^k_n-1\otimes \pi_n\right)= \pi^*_n-\pi^*_np_n^{\ell+1}\qquad \text{for all $\ell\in\NN$,}$$
and note that both sides converge in the appropreate 
$\|\cdot\|_{v_n}$-norms as $\ell\to\infty$. 
In the limit, the left-hand side equals  
$\left(\pi^*_n-\pi^*_np_n\right)B_n$
and the right-hand side is 
$\pi_n^*-\pi_n$.
This concludes the proof of the Claim. 

In order to establish the inequality in~\eqref{vinskatrta}, 
note that $\|f_n\|_{v_n}\leq ||F||_V$ and  Remark~\ref{rem:Duality}(I)
imply
$\big|(\pi^*_n-\pi_n)( f_n)\big| \leq \|F\|_V(\pi^*_n-\pi_n)(f_n/\|f_n\|_{v_n})\leq ||F||_V||\pi^*_n-\pi_n||_{v_n}$.
This inequality and the Claim imply~\eqref{vinskatrta}.

The next task is to prove~\eqref{2intbound}.
Let $g\colon X_n\to \RR$ be a function satisfying $\|g\|_{v_n}\leq 1$. 
Recall that $m_n+1$ is the cardinality of $X_n$ and that the function $a^n(\cdot)$ is defined in~\eqref{aodx}.
We apply the definitinons of the stochastic matrix $p_n^*$ and its 
stationary law $\pi_n^*$, given in~\eqref{eq:*_chain},
to obtain 
\begin{align*}
&\left(\pi^*_n-\pi^*_np_n\right)g=\pi^*_n\left(p^*_n-p_n\right)g=\sum_{j=0}^{m_n}\sum_{i=0}^{m_n}\big[\pi(J^n_i)\left((p^*_n)_{ij}-(p_n)_{ij}\right)\big]g(a^n_j)\nonumber\\
&=\sum_{j=0}^{m_n}\left[\int_{\RR^d} \left(P(x,J_j^n)-P(a^n(x),J^n_j)\right)\pi(x)dx\right]g(a^n_j)\nonumber\\
&=\int_{\RR^d} \left(\int_{\RR^d} g(a^n(y))\big[\alpha(x,y)q(x,y)-\alpha(a^n(x),y)q(a^n(x),y)\big]dy\right)\pi(x)dx\nonumber\\
&\hspace{5pt}+\int_{\RR^d} \left(\int_{\RR^d} g(a^n(x))\big[\alpha(a^n(x),y)q(a^n(x),y)-\alpha(x,y)q(x,y)\big]dy\right)\pi(x)dx\text{,}\nonumber
\end{align*}
where the identity 
$\delta_x(J^n_j)g(a^n_j)=\delta_{a^n(x)}(J^n_j)g(a^n_j)=\delta_{a^n(x)}(J^n_j)g(a^n(x))$,
for any
$x\in\RR^d$ and 
$j\in\{0,\ldots,m_n+1\}$,
implies the final equality. 
Since the function
$g\in\BB_{v_n}$, with
$\|g\|_{v_n}\leq 1$, in the calculation above was arbitrary 
and satisfies
$|g(a^n(x))|\leq V(a^n(x))$
for all 
$x\in\RR^d$,
we find
\begin{equation*}
\left|\left|\pi^*_n-\pi^*_np_n\right|\right|_{v_n}=
\sup_{\|g\|_{v_n}\leq 1}|\left(\pi^*_n-\pi^*_np_n\right)g|
\leq \int_{\RR^d\times \RR^d} \big(V(a^n(y))+V(a^n(x))\big)Z_n(x,y)\pi(x)dydx,
\end{equation*}
which, together with~\eqref{Vbound}, implies~\eqref{2intbound}.

We now apply the DCT to deduce that  the right-hand side in~\eqref{2intbound} converges to zero as $n\to\infty$. 
The definition of $Z_n(x,y)$ in the proposition, the form of the transition kernel $P$
in~\eqref{kernelform}, the drift condition in~\hyperref[A1]{A1} and the inequality in~\eqref{Vbound} 
imply the estimates 
\begin{eqnarray*}
\int_{\RR^d} \big(V(y)+V(x)\big)Z_n(x,y)dy & \leq & PV(x)+PV(a^n(x))+2 V(x)\\ 
& \leq & \big((2+\sup_{k\in\NN}\delta_k)\left(\lambda_V+\kappa_V\right)+2\big)V(x)
\end{eqnarray*}
for all $x\in\RR^d$.
Since, by Assumption~\hyperref[A1]{A1}, we have $\pi(V)<\infty$, 
by the DCT the right-hand side in~\eqref{2intbound} tends to zero (as $n\to\infty$) 
if 
\begin{equation}\label{krap}
\lim_{n\to\infty}\int_{\RR^d}\big( V(y)+V(x)\big)Z_n(x,y)dy=0\hspace{10pt}\text{for all $x\in\RR$.}
\end{equation}

To establish the limit in~\eqref{krap}, pick an arbitrary $x\in\RR^d$ 
and note that for every $y\in\RR$ 
it holds that $\lim_{n\to\infty} Z_n(x,y)=0$ by~\eqref{aconv} and the assumptions in~\hyperref[A2]{A2} and~\hyperref[A3]{A3}.
Hence the integrand in~\eqref{krap} converges to zero point-wise. 
By the estimate in~\eqref{novastvar}, the integrand in~\eqref{krap} is
bounded above by the function
$$y\mapsto \big(V(y)+V(x)\big)\big(\eta_x\pi(y)+\alpha(x,y)q(x,y)\big)$$
which does not depend on $n$ and is $\Leb$-integrable in $y\in\RR^d$. Hence the limit  
in~\eqref{krap} holds by the DTC and, consequently, the right-hand side in~\eqref{2intbound} converges to zero
as $n\to\infty$.
This fact and the estimates in~\eqref{vinskatrta} and~\eqref{2intbound}
imply that the first term on right-hand side of~\eqref{decomp}
tends to zero as $n\to\infty$ and the proposition follows. 
\end{proof}

In order to prove  that the limit
$\lim_{n\to\infty}\Delta(\tilde{F}_n)=0$ 
holds $\pi$-a.e. 
(i.e. the second condition in~\eqref{eq:the_conditions}), 
we need the following elementary estimate. 

\begin{lem}
\label{lem:bound_on_Delta_n}
The function 
$\Delta(\tilde{F}_n):\RR^d\to\RR$, can be bounded above as follows:
\begin{eqnarray*}
\left|\Delta(\tilde{F}_n)(x)\right|&\leq& \left|F(x)-F(a^n(x))\right|+|\pi_n( f_n)-\pi(F)| \\
&+&\left|\left(P\tilde{F}_n-\tilde{F}_n\right)(x) -\left(P\tilde{F}_n-\tilde{F}_n\right)(a^n(x))\right|\qquad\text{for all $x\in\RR^d$.}\nonumber
\end{eqnarray*}
\end{lem}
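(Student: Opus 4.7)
The plan is to decompose $\Delta(\tilde{F}_n)(x)$ by adding and subtracting its value at the representative $a^n(x)$, and then to show that this value at the representative collapses to the single constant $\pi_n(f_n) - \pi(F)$, thanks to the fact that $\hat{f}_n$ solves the discrete Poisson equation \hyperref[PE]{PE($p_n$,$f_n$)}.

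First I would compute $\tilde{F}_n$ and $P\tilde{F}_n$ at the representatives. From step (III) of the \hyperref[alg1]{\textbf{Scheme}} together with~\eqref{aodx}, we have $\tilde{F}_n(a^n_j) = \hat{f}_n(a^n_j)$ for every $j$. Moreover, since $\tilde{F}_n = \sum_k \hat{f}_n(a^n_k) 1_{J_k^n}$ is constant on each partition element, direct integration gives
\begin{equation*}
(P\tilde{F}_n)(a^n_j) = \sum_{k=0}^{m_n} \hat{f}_n(a^n_k) P(a^n_j, J_k^n) = \sum_{k=0}^{m_n} (p_n)_{jk} \hat{f}_n(a^n_k) = (p_n \hat{f}_n)(a^n_j).
\end{equation*}
Combining these two identities with the fact that $\hat{f}_n$ satisfies $\hat{f}_n - p_n\hat{f}_n = f_n - \pi_n(f_n)$ yields
\begin{equation*}
(P\tilde{F}_n - \tilde{F}_n)(a^n_j) = (p_n\hat{f}_n - \hat{f}_n)(a^n_j) = \pi_n(f_n) - f_n(a^n_j) = \pi_n(f_n) - F(a^n_j).
\end{equation*}

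Next I would evaluate $\Delta(\tilde{F}_n)$ at $a^n(x)$ using the definition~\eqref{eq:Delta_n}: for any $x\in\RR^d$,
\begin{equation*}
\Delta(\tilde{F}_n)(a^n(x)) = (P\tilde{F}_n - \tilde{F}_n)(a^n(x)) + F(a^n(x)) - \pi(F) = \pi_n(f_n) - \pi(F),
\end{equation*}
where the $F(a^n(x))$ terms cancel. The point is that the ``error'' $\Delta(\tilde{F}_n)$ at representative points is a single constant, independent of $x$.

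Finally I would write
\begin{equation*}
\Delta(\tilde{F}_n)(x) = \bigl[\Delta(\tilde{F}_n)(x) - \Delta(\tilde{F}_n)(a^n(x))\bigr] + \Delta(\tilde{F}_n)(a^n(x)),
\end{equation*}
expand the first bracket using~\eqref{eq:Delta_n} (noting that the constant $\pi(F)$ cancels) to obtain
\begin{equation*}
\Delta(\tilde{F}_n)(x) - \Delta(\tilde{F}_n)(a^n(x)) = \bigl[(P\tilde{F}_n-\tilde{F}_n)(x) - (P\tilde{F}_n-\tilde{F}_n)(a^n(x))\bigr] + \bigl[F(x) - F(a^n(x))\bigr],
\end{equation*}
substitute the identity $\Delta(\tilde{F}_n)(a^n(x)) = \pi_n(f_n) - \pi(F)$, and apply the triangle inequality. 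This yields exactly the claimed bound. There is no real obstacle here; the content of the lemma is simply the observation that $\hat{f}_n$ solves the discrete Poisson equation, so $\Delta(\tilde{F}_n)$ vanishes at representative points up to the bias $\pi_n(f_n) - \pi(F)$ — everything else is bookkeeping via the triangle inequality.
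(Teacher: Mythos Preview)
Your proof is correct and follows essentially the same approach as the paper's: both establish that $\Delta(\tilde{F}_n)$ is the constant $\pi_n(f_n)-\pi(F)$ at every representative $a^n(x)$, then decompose $\Delta(\tilde{F}_n)(x)$ by adding and subtracting its value at $a^n(x)$ and apply the triangle inequality. The only cosmetic difference is that the paper routes the identity $\Delta(\tilde{F}_n)(b)=\pi_n(f_n)-\pi(F)$ through the continuous solution $\hat{F}$ of \hyperref[PE]{PE($P$,$F$)} (writing $\Delta(\tilde{F}_n)=P(\tilde{F}_n-\hat{F})-(\tilde{F}_n-\hat{F})$), whereas you obtain it directly from $(P\tilde{F}_n)(a^n_j)=(p_n\hat{f}_n)(a^n_j)$ and the discrete Poisson equation, which is slightly more self-contained.
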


\begin{proof}
Recall that $\tilde{F}_n(x)=\sum_{j=0}^{m_n}\hat{f}_n(a^n_j)1_{J^n_j}(x)$. Hence also 
$P\tilde{F}_n(x)=\sum_{j=0}^{m_n}\hat{f}_n(a^n_j)P(x,J_j^n)$. 
The following equalities hold
\begin{equation}\label{transform}
\Delta(\tilde{F}_n)(b)=P(\tilde{F}_n-\hat{F})(b)-(\tilde{F}_n-\hat{F})(b)=\pi_n( f_n)-\pi(F)\qquad \text{for any $b\in X_n$,}
\end{equation}
since $\hat{F}$ (resp. $\hat{f}_n$)
solves the Poisson equation in~\hyperref[PE]{PE($P$,$F$)} (resp.~\hyperref[PE]{PE($p_n$,$f_n$)}). 
Recall that the function $a^n(\cdot)$ is defined in~\eqref{aodx}. 
Using the definition of $\Delta(\tilde{F}_n)$, the equalities in~\eqref{transform} and the fact that 
$\hat{F}$ solves \hyperref[PE]{PE($P$,$F$)}
yields
\begin{eqnarray*}
\Delta(\tilde{F}_n)(x)
& = & \left(\hat{F}-P\hat{F}\right)(x)-\left(\hat{F}-P\hat{F}\right)(a^n(x))+\left(\hat{F}-P\hat{F}\right)(a^n(x))\\
&   & -\left(\tilde{F}_n-P\tilde{F}_n\right)(a^n(x))+\left(\tilde{F}_n-P\tilde{F}_n\right)(a^n(x)) -\left(\tilde{F}_n-P\tilde{F}_n\right)(x)\\
& = & F(x)-F(a^n(x))+\pi_n( f_n)-\pi(F)+\left(P\tilde{F}_n-\tilde{F}_n\right)(x)-\left(P\tilde{F}_n-\tilde{F}_n\right)(a^n(x))
\end{eqnarray*}
for all $x\in\RR^d$.
The triangle inequality implies the lemma. 
\end{proof}

\begin{proof}[\textbf{Proof of Theorem~\ref{thetheorem}:}] 
By Proposition~\ref{l2var}, it is sufficient to verify that the conditions in~\eqref{eq:the_conditions}
hold for the sequence of functions $(\Delta(\tilde{F}_n))_{n\in\NN}$.
By Proposition~\ref{kje1}
there exists  a constant $\xi'$ and a sequence $(c_n)_{n\in\NN}$ such that the following estimate holds
$$\left|\tilde{F}_n(x)+c_n-\hat{F}(x)\right|\leq \xi' V(x)\qquad\text{for all $n\in\NN$ and $x\in\RR^d$.}$$
Note that
we have
$\Delta(\tilde{F}_n)=P(\tilde{F}_{n}+c_n-\hat{F})-(\tilde{F}_n+c_n-\hat{F})$.
The structure of the transition kernel $P$ in~\eqref{kernelform} implies the following bounds 
for all $n\in\NN$ and $x\in\RR^d$:
\begin{align*}
\left|\Delta(\tilde{F}_n)(x)\right|&\leq\int_{\RR^d} \left(\left|\tilde{F}_{n}(y)+c_n-\hat{F}(y)\right|+\left|\tilde{F}_{n}(x)+c_n-\hat{F}(x)\right|\right)\alpha(x,y)q(x,y)dy\\ 
&\leq \int_{\RR^d} \xi'V(y)\alpha(x,y)q(x,y)dy+\xi'V(x)\int_{\RR^d} \alpha(x,y)q(x,y)dy\\
&\leq \xi'(PV(x) +  V(x)) \leq (\xi'+\xi'\lambda_V +\xi' \kappa_V)V(x),
\end{align*}
where the last inequality is a consequence of the drift condition in~\hyperref[A1]{A1}. 
This inequality and the definition of the $V$-norm in~\eqref{BV}
imply that the first condition in~\eqref{eq:the_conditions} is satisfied. 

We now establish the limit in~\eqref{eq:the_conditions}. 
Fix an arbitrary $x\in\RR^d$, such that  $F$ is continuous at $x$.
The first term on the right-hand side of the inequality 
in Lemma~\ref{lem:bound_on_Delta_n} 
therefore converges to zero by~\eqref{aconv}.
The second term, which is independent of $x$,
tends to zero by Proposition~\ref{kje2}.
In order to deal with the third term on the right-hand side of the inequality in 
Lemma~\ref{lem:bound_on_Delta_n}, note that,
by the definition of $\tilde{F}_n$ in Theorem~\ref{thetheorem}, 
it holds that
$\tilde{F}_n(a^n(x))=\tilde{F}_n(x)$ for all $n\in\NN$. 
Consequently, the structure of the transition kernel  
$P$ in~\eqref{kernelform} implies that 
this term equals 
$|\int_{\RR^d}(\tilde{F}_n(y)-\tilde{F}_n(x))\big[\alpha(x,y)q(x,y)-\alpha(a^n(x),y)q(a^n(x),y)\big]dy|$.
The integrand 
converges to zero for every $y\in\RR^d$ by~\eqref{aconv} and Assumptions~\hyperref[A2]{A2}--\hyperref[A3]{A3}. 
Furthermore, by Proposition~\ref{kje1},  we obtain the inequality 
\begin{equation}\label{tretjibound} \left|\tilde{F}_n(y)-\tilde{F}_n(x)\right|=\left|\tilde{F}_n(y)+c_n-\tilde{F}_n(x)-c_n\right|\leq \xi\left(V(y)+V(x)\right)\quad \text{for every $y\in\RR^d$.}
\end{equation}
The inequality in~\eqref{novastvar} yields an upper bound  
\begin{equation}
	\label{eq:bound_of_Z_n}
|\alpha(x,y)q(x,y)-\alpha(a^n(x),y)q(a^n(x),y)|\leq \eta_x\pi(y)+\alpha(x,y)q(x,y) \quad \text{for all $y\in\RR^d$.}
\end{equation}
The product of the right-hand sides in the inequalities~\eqref{tretjibound} and~\eqref{eq:bound_of_Z_n}
is integrable over 
$\RR^d$ 
with respect to $\Leb(dy)$.
Hence, the DCT implies that 
the third term on the right-hand side of the inequality in Lemma~\ref{lem:bound_on_Delta_n}
converges to zero. 
Therefore, 
$\lim_{n\to\infty}\Delta(\tilde{F}_n)(x)=0$ holds for all $x\in\RR^d$ at which $F$ is continuous. 
It only remains to note that,
by the assumption on $F$ in Theorem~\ref{thetheorem},
this limit holds $\pi$-a.e.
\end{proof}

\section{The rate of decay of asymptotic variances}
\label{sec:Rate}
Theorem~\ref{thetheorem} states that, under~\hyperref[A1]{A1}-\hyperref[A3]{A3}, 
the asymptotic variance $\sigma^2_n$ in~\hyperref[CLT]{CLT($\Phi$, $ F+P\tilde{F}_n-\tilde{F}_n$)}
converges to zero as $n\to\infty$. This section investigates the
speed of this convergence. We show that, 
under suitable Lipschitz and integrability
conditions, the rate of decay is bounded above by the slower of
the decay rates of the sequences $\pi(V^21_{J_0^n})$ and $\delta_n^2=\delta(\XX_n,V)^2$ (see Remark~\ref{A3remark}(i) and
Equation~\eqref{meshdef}
respectively). 
This result suggests that, when constructing an exhaustive sequence of
allotments (see Definition~\ref{allotmentdef} above)
with respect to the drift function $V$, we can guarantee fastest rate of decay
of the asymptotic variance $\sigma^2_n$ when the growth of the bounded set
$\RR^d\setminus J^n_0$
and the decay of the $V$-mesh of the partition of $\RR^d\setminus J^n_0$ are
balanced appropriately ($\delta_n^2$ and $\pi(V^21_{J_0^n})$ must be comparable in size
as $n\to\infty$).

\begin{thm}\label{rate}
Let the assumptions of Theorem~\ref{thetheorem} be satisfied 
and assume that the conditions 
\begin{eqnarray}
&&\limsup_{n\to\infty}\delta_n^{-2} \int_{\RR^d\setminus J^n_0}\Bigg(\int_{\RR^d}\big(V(x)+V(y)\big)Z_n(x,y)dy\Bigg)^2\pi(x)dx<\infty,\label{limsup1}\\
&&\limsup_{n\to\infty}\delta_n^{-2}\int_{\RR^d\setminus J^n_0}|F(x)-F(a^n(x))|^2\pi(x)dx<\infty\label{limsup2}
\end{eqnarray}
hold, where $Z_n(x,y)$, for $x,y\in\RR^d$,
is defined in Proposition~\ref{kje2}
and the function $a^n(\cdot)$ is given in~\eqref{aodx}. 
Then there exists a constant $C_0>0$ such that 
$$
\sigma^2_n\leq C_0 
\max\{\pi(V^21_{J^n_0}),\delta_n^2\}\qquad\text{for all $n\in\NN$.}
$$
\end{thm}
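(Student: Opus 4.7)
The plan is as follows. By Proposition~\ref{l2var}, it suffices to exhibit a constant $C_0'>0$, independent of $n$, with $\pi(\Delta(\tilde F_n)^2)\leq C_0'\max\{\pi(V^2 1_{J_0^n}),\delta_n^2\}$. I split this integral over $J_0^n$ and its complement. On $J_0^n$, the first condition in~\eqref{eq:the_conditions} (established at the start of the proof of Theorem~\ref{thetheorem}) gives a constant $K>0$, independent of $n$, with $|\Delta(\tilde F_n)(x)|\leq K\, V(x)$ for every $x\in\RR^d$, hence $\int_{J_0^n}\Delta(\tilde F_n)^2\,d\pi\leq K^2\pi(V^2 1_{J_0^n})$, which accounts for the first quantity inside the max.

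On $\RR^d\setminus J_0^n$ I apply Lemma~\ref{lem:bound_on_Delta_n} together with $(a+b+c)^2\leq 3(a^2+b^2+c^2)$ to decompose the integrand into three non-negative pieces. The piece $|F(x)-F(a^n(x))|^2$ integrates against $\pi$ to a multiple of $\delta_n^2$ by hypothesis~\eqref{limsup2}. The third piece equals $|P\tilde F_n(x)-P\tilde F_n(a^n(x))|^2$, since $\tilde F_n\circ a^n=\tilde F_n$; writing this difference through the kernel as at the end of the proof of Theorem~\ref{thetheorem} and applying the uniform Lipschitz-type bound~\eqref{tretjibound}, i.e.\ $|\tilde F_n(y)-\tilde F_n(x)|\leq\xi(V(y)+V(x))$, produces the pointwise estimate $\xi^2\bigl(\int(V(y)+V(x))Z_n(x,y)dy\bigr)^2$, so that hypothesis~\eqref{limsup1} yields the desired $\delta_n^2$ bound after integration against $\pi$ over $\RR^d\setminus J_0^n$.

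The main obstacle is the middle piece, which after integration equals the $n$-dependent constant $|\pi_n(f_n)-\pi(F)|^2$: Proposition~\ref{kje2} shows this vanishes but supplies no rate. I refine that argument. Starting from the decomposition~\eqref{decomp} and the estimates~\eqref{vinskatrta}--\eqref{2intbound}, $|(\pi_n^*-\pi_n)(f_n)|$ is controlled by a constant multiple of $\int_{\RR^d}g_n(x)\pi(x)dx$, where $g_n(x):=\int(V(y)+V(x))Z_n(x,y)dy$, while $\pi_n^*(f_n-h_n)=\int(F(a^n(x))-F(x))\pi(x)dx$. In both cases the integrand is pointwise dominated by a multiple of $V(x)$: for $g_n$ via the drift condition~\hyperref[A1]{A1} combined with~\eqref{Vbound}, and for the other via $F\in\BB_V$ combined with~\eqref{Vbound}. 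Splitting each outer integral over $J_0^n$ and its complement and applying Cauchy--Schwarz, the $J_0^n$ contribution is at most a multiple of $\int_{J_0^n}V\,d\pi\leq\sqrt{\pi(V^2 1_{J_0^n})}$ (using $\pi(J_0^n)\leq 1$), while the complementary contribution is at most a multiple of $\delta_n$ by~\eqref{limsup1} and~\eqref{limsup2} respectively. Squaring yields $|\pi_n(f_n)-\pi(F)|^2\lesssim\max\{\pi(V^2 1_{J_0^n}),\delta_n^2\}$.

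Combining the three pieces completes the bound on $\int_{\RR^d\setminus J_0^n}\Delta(\tilde F_n)^2\,d\pi$, and summing with the bound on $J_0^n$ concludes the proof. All the ingredients other than the middle piece are quantitative refinements of arguments already carried out in Section~\ref{section5}; the Cauchy--Schwarz splitting in the middle piece is where the hypotheses~\eqref{limsup1} and~\eqref{limsup2} are genuinely used to convert the qualitative convergence of Proposition~\ref{kje2} into the quantitative rate claimed by the theorem.
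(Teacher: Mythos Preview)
Your proposal is correct and follows essentially the same approach as the paper's proof: reduce to bounding $\pi(\Delta(\tilde F_n)^2)$ via Proposition~\ref{l2var}, split over $J_0^n$ and its complement, apply the uniform $V$-norm bound~\eqref{eq:the_conditions} on $J_0^n$, and on the complement use Lemma~\ref{lem:bound_on_Delta_n} to obtain three terms handled respectively by~\eqref{limsup2}, by~\eqref{tretjibound} combined with~\eqref{limsup1}, and by refining Proposition~\ref{kje2} through~\eqref{decomp}--\eqref{2intbound}. The only cosmetic difference is that in the middle term the paper applies Jensen's inequality before splitting the outer integral over $J_0^n$ and its complement, whereas you split first and then apply Cauchy--Schwarz on each piece; both routes yield the same bound.
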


Theorem~\ref{rate}, proved in Section~\ref{subsec:Proofs_rate} below, holds under general conditions that may be hard to verify in
specific examples as the functions in~\eqref{limsup1}--\eqref{limsup2} depend on 
the drift function $V$, often not available in closed form.
With this in mind we study a broad class of Metropolis-Hastings chains 
with the property that $V$ can be described in terms of the target density 
$\pi$ and conditions~\eqref{limsup1}--\eqref{limsup2} 
can be deduced from certain geometric properties of the level sets of $\pi$ near infinity.
Our approach builds on the results in~\cite{roberts2,jarner}.

Consider the class of Random walk Metropolis chains in $\RR^d$. Put differently, 
the proposal density takes the form $q(x,y)=q^*(y-x)$ for some density
$q^*\colon\RR^d\to\RR$. Assume $q^*$ is continuous, strictly positive and
bounded. Assume also that the target $\pi$ is continuously differentiable,
positive and satisfies: 
\begin{equation}\label{eq:RWM_geom}
\lim_{|x|\to\infty}\frac{x}{|x|}\cdot \nabla(\log
\pi)(x)=-\infty\hspace{10pt}\text{and}\hspace{15pt}\limsup_{|x|\to\infty}\frac{x}{|x|}\cdot
\frac{\nabla\pi(x)}{|\nabla\pi(x)|}<0\text{.} 
\end{equation}
Under these assumptions the kernel $P$
in~\hyperref[kernelform]{($\RWM(q,\pi)$)}
satisfies~\hyperref[A1]{A1}-\hyperref[A3]{A3} with a drift function
$V_{\gamma}:=c_{\gamma}\pi^{-\gamma}$ (where $c_{\gamma}$ is a constant that
ensures $V_{\gamma}>1$) for any $0<\gamma<\frac{1}{2}$ (see~\cite[Thms~4.1 and~4.3]{jarner} and Remark~\ref{A3remark}(iv)).
Then the $V_{\gamma}$-radius (see~\eqref{radiusdef}) equals
$\text{rad}(\XX_n,V_{\gamma})=\inf_{y\in J^n_0}c_{\gamma}\pi^{-\gamma}(y)$ 
and the $V_{\gamma}$-mesh 
$\delta_{\gamma,n}=\delta(\XX_n,V_{\gamma})$,
defined in~\eqref{meshdef}, takes the form
\begin{equation}\label{eq:deltagamma}
\delta_{\gamma,n}=\max\left(\sup_{x\notin J^n_0}|x-a^n(x)|, \sup_{x\in \RR^d}\left(\pi(x)/\pi(a^n(x))\right)^{\gamma}-1\right).
\end{equation}

The main assumptions in Proposition~\ref{vrana} below are:\\
\noindent(i) there exists
a function $K_q:\RR^d\to\RR$ and $\epsilon_q>0$ such that 
\begin{equation}\label{qstvar}
\int_{\RR^d}K_q(z)dz<\infty
\quad\text{and}\quad
|q^*(z)-q^*(\tilde{z})|\leq |z-\tilde{z}|K_q(z) \quad \text{for all $z,\tilde z\in\RR^d$ with $|z-\tilde{z}|<\epsilon_q$;}
\end{equation}
(ii) there exist constants $\beta\in(\frac{1}{2},1)$, $c_{\beta}>0$ and $\epsilon_{\pi}>0$ such that 
\begin{equation}\label{pistvar2}
|\nabla\pi(\tilde{x})|<c_{\beta}\pi(x)^{\beta}\quad\text{for all $x,\tilde x\in\RR^d$ with $|x-\tilde{x}|<\epsilon_{\pi}$.}
\end{equation}

\begin{rema}
Assumption~\eqref{qstvar} is a version of a local Lipschitz condition and holds for many proposals $q^*$ used in practice, e.g. normal densities. 
Assumption~\eqref{pistvar2} and condition~\eqref{eq:RWM_geom} hold for instance when target density $\pi$ is proportional to $e^{-p(x)}$, 
for a polynomial $p$ of degree $k$ with leading order terms $p_k$ satisfying $p_k(x)\to\infty$  as $|x|\to\infty$.
\end{rema}

An application of Theorem~\ref{rate} in this setting yields the following result. 

\begin{prop}\label{vrana} 
Assume that~\eqref{qstvar}--\eqref{pistvar2} hold and
fix $\gamma\in(0,\beta-\frac{1}{2})$. Let $(\XX_n)_{n\in\NN}$ be an exhaustive
sequence of allotments with respect to $V_{\gamma}$
defined above.  Let $F\in\BB_{V_{\gamma}}$ be a continuously differentiable function satisfying the inequality
$|\nabla F(\tilde{x})|<c_F\pi^{\gamma-\frac{1}{2}}(x)$ for all $x,\tilde{x}\in\RR^d$ with
$|x-\tilde{x}|<\epsilon_F$ (for some constants $c_F,\epsilon_F>0$). 
Then there exists a constant $C_\gamma>0$ such that 
the asymptotic variance $\sigma_n^2$ in the~\hyperref[CLT]{CLT($\Phi$, $F+P\tilde{F}_n-\tilde{F}_n$)},
where $\tilde F_n$ is 
constructed by the~\hyperref[alg1]{\textbf{Scheme}} with input $P$, $F$ and $\XX_n$,
satisfies
$$
\sigma_n^2\leq C_\gamma \max\left(\delta^2_{\gamma,n},\int_{J^n_0}\pi^{1-2\gamma}(x)dx\right)\qquad\text{for all $n\in\NN$}.
$$
\end{prop}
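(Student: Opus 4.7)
The plan is to verify the hypotheses of Theorem~\ref{rate} for the drift function $V_\gamma = c_\gamma \pi^{-\gamma}$, since with this choice $\pi(V_\gamma^2 1_{J_0^n}) = c_\gamma^2 \int_{J_0^n} \pi^{1-2\gamma}(x)\,dx$ is precisely one of the two terms in the target bound, the other being $\delta_{\gamma,n}^2$ by definition. By~\cite[Thms~4.1 and~4.3]{jarner}, condition~\eqref{eq:RWM_geom} together with the assumptions on $q^*$ guarantees that $P$ satisfies \hyperref[A1]{A1}--\hyperref[A3]{A3} with drift $V_\gamma$, so Theorem~\ref{thetheorem} applies. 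I will repeatedly use that the first limit in~\eqref{eq:RWM_geom} forces super-exponential decay of $\pi$, whence $\int_{\RR^d} \pi^a(x)\,dx<\infty$ for every $a>0$.

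Verifying~\eqref{limsup2} is short. The first component of~\eqref{eq:deltagamma} gives $|x-a^n(x)|\leq \delta_{\gamma,n}$ for $x\notin J^n_0$, which is below $\epsilon_F$ for all sufficiently large $n$. The mean value theorem and the hypothesis on $\nabla F$ then yield $|F(x)-F(a^n(x))|\leq c_F\delta_{\gamma,n}\pi^{\gamma-1/2}(x)$, and squaring, multiplying by $\pi(x)$ and integrating produces the bound $c_F^2\int_{\RR^d}\pi^{2\gamma}(x)\,dx<\infty$.

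For~\eqref{limsup1} I first establish a pointwise estimate $Z_n(x,y)\leq \delta_{\gamma,n}\Psi(x,y)$ with $\Psi$ independent of $n$. Using $\alpha(x,y)q(x,y)=\min\bigl(q^*(y-x),\pi(y)q^*(x-y)/\pi(x)\bigr)$ and the elementary bound $|\min(a,b)-\min(a',b')|\leq |a-a'|+|b-b'|$, one obtains
$$
Z_n(x,y)\leq |q^*(y-x)-q^*(y-a^n(x))|+\pi(y)\left|\frac{q^*(x-y)}{\pi(x)}-\frac{q^*(a^n(x)-y)}{\pi(a^n(x))}\right|.
$$
For $n$ large enough that $\delta_{\gamma,n}<\min(\epsilon_q,\epsilon_\pi)$, the Lipschitz-type estimate~\eqref{qstvar} controls both $q^*$-differences (yielding factors $\delta_{\gamma,n}K_q(\pm(y-x))$), while the second component of~\eqref{eq:deltagamma} shows $\pi(a^n(x))$ stays within a fixed multiplicative factor of $\pi(x)$, and~\eqref{pistvar2} gives $|\pi(x)-\pi(a^n(x))|\leq c_\beta\delta_{\gamma,n}\pi^\beta(x)$. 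Combining produces $\Psi$ as an explicit sum of terms involving $K_q$, $q^*$, and powers of $\pi$.

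Substituting into~\eqref{limsup1}, the estimate $(V_\gamma(x)+V_\gamma(y))^2\leq 2c_\gamma^2(\pi^{-2\gamma}(x)+\pi^{-2\gamma}(y))$ together with Fubini reduces the problem to finiteness of a handful of integrals of $\pi$ raised to certain positive powers (some weighted by $K_q^2$ or $q^*$). The strict inequality $\gamma<\beta-\tfrac12$ is exactly what ensures that all resulting exponents of $\pi$ remain strictly positive after the cancellations, so super-exponential decay closes the argument. The principal obstacle is this bookkeeping of exponents and keeping track of factors $\pi^{-\gamma}$, $\pi^{\beta-1}$ and $\pi^{-1}$ introduced by $V_\gamma$, by~\eqref{pistvar2} and by dividing by $\pi(a^n(x))$, respectively; once carried out, Theorem~\ref{rate} yields the claimed bound $\sigma_n^2\leq C_\gamma\max\bigl(\delta_{\gamma,n}^2,\int_{J^n_0}\pi^{1-2\gamma}(x)\,dx\bigr)$.
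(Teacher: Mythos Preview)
Your verification of~\eqref{limsup2} is correct and matches the paper's argument. The difficulty is with~\eqref{limsup1}: the global bound $Z_n(x,y)\leq\delta_{\gamma,n}\Psi(x,y)$ obtained from $|\min(a,b)-\min(a',b')|\leq|a-a'|+|b-b'|$ is valid but too crude to yield finiteness. The $|b-b'|$ contribution (the ``rejection'' branch) forces you to divide by $\pi(x)$ and $\pi(a^n(x))$, producing terms in $\Psi$ of the form $\pi(y)\pi(x)^{-1}K_q(x-y)$ and $\pi(y)\pi(x)^{\beta-2}q^*(\cdot)$. On the set $\{y:\pi(y)\geq\pi(x)\}$ the ratio $\pi(y)/\pi(x)$ is unbounded, and after multiplying by $V_\gamma(x)+V_\gamma(y)$, integrating in $y$, squaring, multiplying by $\pi(x)$ and integrating in $x$, you are led to integrals of the form $\int_{\RR^d}\pi(x)^{-c}\,dx$ with $c>0$, which diverge. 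The claim that ``all resulting exponents of $\pi$ remain strictly positive'' is therefore false: the constraint $\gamma<\beta-\tfrac12$ cannot absorb the extra factor $\pi(x)^{-1}$ introduced by dividing by $\pi(x)$ in the rejection branch.

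The paper repairs this by splitting the inner integral over the acceptance region $\mathcal{A}_x:=\{y:\pi(y)\geq\pi(x)\}$ and its complement. On $\mathcal{A}_x$ one has $\alpha(x,y)=1$, so $Z_n$ can be bounded using only the ``$a$'' term and $|\alpha(x,y)-\alpha(a^n(x),y)|\leq|\pi(a^n(x))-\pi(y)|/\pi(a^n(x))$ with $\pi(y)\geq\pi(x)$; no division by $\pi(x)$ occurs, and additionally $V_\gamma(y)\leq V_\gamma(x)$ there. On $\mathcal{A}_x^c$ the dangerous factor satisfies $\pi(y)/\pi(x)<1$, which tames the rejection branch; a Cauchy--Schwarz step (splitting $\int V_\gamma^2 Z_n\cdot\int Z_n$) and Fubini are then used to close the estimate. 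This region-dependent bound on $Z_n$, rather than a single global $\Psi$, is the missing ingredient in your argument.
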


\begin{rema}
Any polynomial $F$, and in fact any function whose gradient grows no faster than
a polynomial, satisfies assumptions of Proposition~\ref{vrana} 
for any
$\gamma\in(0,\beta-\frac{1}{2})$.
\end{rema}

\subsection{Proofs}
\label{subsec:Proofs_rate}
\begin{proof}[\textbf{Proof of Theorem~\ref{rate}}]
Proposition~\ref{l2var} implies that there exists a constant $C_1>1$ such that 
$\sigma^2_n\leq C_1\cdot\pi\big(\Delta(\tilde{F}_n)^2\big)$ for every $n\in\NN$.
Thus, $\limsup_{n\uparrow\infty}\sigma^2_n/\pi\big(\Delta(\tilde{F}_n)^2\big)<\infty$. 
Futhermore,
the inequality in~\eqref{eq:the_conditions}
implies that $\limsup_{n\uparrow\infty}\pi\big(\Delta(\tilde{F}_n)^21_{J^n_0}\big)/\pi(V^21_{J^n_0})<\infty$. 

Lemma~\ref{lem:bound_on_Delta_n} yields
$\pi\big(\Delta(\tilde{F}_n)^21_{\RR^d\setminus J^n_0}\big)\leq 3(T_1(n)+T_2(n)+T_3(n))$, 
where
\begin{eqnarray*}
&&T_1(n):=\int_{\RR^d\setminus J^n_0}\left|\left(P\tilde{F}_n-\tilde{F}_n\right)(x) -\left(P\tilde{F}_n-\tilde{F}_n\right)(a^n(x))\right|^2\pi(x)dx\text{,}\\
&& T_2(n):=\int_{\RR^d\setminus J^n_0}|F(x)-F(a^n(x))|^2\pi(x)dx\hspace{15pt}\text{and}\hspace{15pt}T_3(n):=|\pi_n(f_n)-\pi(F)|^2\text{.}\nonumber
\end{eqnarray*}
Assumption~\eqref{limsup2} implies 
$\limsup_{n\uparrow\infty} T_2(n)/\delta_n^{2}<\infty$. 
The form of the kernel
$P$ in~\hyperref[kernelform]{$\RWM(q,\pi)$} and the fact that
$\tilde{F}_n(x)=\tilde{F}_n(a^n(x))$ 
for all $x\in\RR^d$
yield 
$$T_1(n)=\int_{\RR^d\setminus
J^n_0}\left|\int_{\RR^d}\left(\tilde{F}_n(y)-\tilde{F}_n(x)\right)\big[\alpha(x,y)q(x,y)-\alpha(a^n(x),y)q(a^n(x),y)\big]dy\right|^2\pi(x)dx\text{.}$$
The inequality in~\eqref{eq:the_conditions} therefore yields
$$ \limsup_{n\uparrow\infty}T_1(n)/\int_{\RR^d\setminus J^n_0}\Big(\int_{\RR^d}\big(V(x)+V(y)\big)Z_n(x,y)dy\Big)^2\pi(x)dx<\infty.$$
Put differntly we obtain 
$\limsup_{n\uparrow\infty} T_1(n)/\delta_n^{2}<\infty$. 

Note that $T_3(n)=|\pi_n(f_n)-\pi(F)|\leq 2|(\pi_n-\pi^*_n)( f_n)|^2+2|\pi^*_n( f_n-h_n)|^2$ 
(recall~\eqref{eq:*_chain}--\eqref{eq:*_chain_2}).
Since
$\pi^*_n( f_n-h_n)=
\int_{\RR^d}(F(x)-F(a^n(x))\pi(x)dx$, the inequality
$F\leq\|F\|_VV$ and~\eqref{Vbound} hold, we find
\begin{align*}
&|\pi^*_n( f_n-h_n)|^2\leq \int_{\RR^d}|F(x)-F(a^n(x))|^2\pi(x)dx\\
&\leq ||F||^2_V(2+\sup_{n\in\NN}\delta_n)^2\pi(V^21_{J^n_0})+\int_{\RR^d\setminus J^n_0}|F(x)-F(a^n(x))|^2\pi(x)dx\text{.}
\end{align*}
Therefore~\eqref{limsup2} yields 
$\limsup_{n\uparrow\infty}|\pi^*_n( f_n-h_n)|^2/\max(\pi(V^21_{J^n_0}),\delta^2_n)<\infty$.
Similarly, inequalities~\eqref{vinskatrta} and~\eqref{2intbound} in
Proposition~\ref{kje2} imply 
$$\limsup_{n\uparrow\infty}|(\pi_n-\pi^*_n)( f_n)|^2/\int_{\RR^d} \left(\int_{\RR^d} \big(V(y)+V(x)\big)Z_n(x,y)dy\right)^2\pi(x)dx<\infty.$$ 
Again, splitting the integral with respect to $x$ into the parts over $J^n_0$ and
$\RR^d\setminus J^n_0$ and applying~\eqref{limsup1}, \hyperref[A1]{A1} and~\eqref{Vbound} 
yields $\limsup_{n\uparrow\infty}|(\pi_n-\pi^*_n)( f_n)|^2/\max(\pi(V^21_{J^n_0}),\delta^2_n)<\infty$.
Hence
$\limsup_{n\uparrow\infty} T_3(n)/\max(\pi(V^21_{J^n_0}),\delta^2_n)<\infty$. 
This concludes the proof of the theorem.
\end{proof}

\begin{proof}[\textbf{Proof of Proposition~\ref{vrana}}] 
Since $P$, $F$ and $\XX_n$ in Proposition~\ref{vrana}
satisfy the assumptions of Theorem~\ref{thetheorem}, we need only to
establish that conditions \eqref{limsup1} and \eqref{limsup2} in Theorem~\ref{rate}
hold for $V=V_{\gamma}$ and $\delta_n=\delta_{\gamma,n}$, defined just before Proposition~\ref{vrana} above. 
Then, since
$\pi(V_{\gamma}^21_{J^n_0})=c^2_{\gamma}\int_{\RR^d}\pi^{1-2\gamma}(x)dx$,
the proposition will follow by Theorem~\ref{rate}. 

Start by establishing~\eqref{limsup2}. We have $|x-a^n(x)|<\delta_{\gamma,n}$ for every $x\in \RR^d\setminus J_0^n$ by~\eqref{eq:deltagamma}. Consequently,
Lagrange's theorem applied to $F$ along a line segment connecting $x$ and $a^n(x)$
yields a point $\tilde{x}^n$ on this segment such that
\begin{align*}
&\delta_{\gamma,n}^{-2}\int_{\RR^d\setminus J^n_0}|F(x)-F(a^n(x))|^2
\pi(x)dx\leq\int_{\RR^d\setminus
J^n_0}\Bigg(\frac{|F(x)-F(a^n(x))|}{|x-a^n(x)|}\Bigg)^2\pi(x)dx\\ &=
\int_{\RR^d\setminus J^n_0}|\nabla F(\tilde{x}^n)|^2\pi(x)dx\leq
c_F \int_{\RR^d}\pi^{2\gamma-1}(x)\pi(x)dx=c_F\int_{\RR^d}\pi^{2\gamma}(x)dx
\end{align*}
holds for a sufficiently large $n$ by assumptions on $F$.
Target $\pi$ decays supper-exponentially along any ray from the origin and 
so does $\pi^{2\gamma}$. Thus, the integral $\int_{\RR^d}\pi^{2\gamma}(x)dx$ is
finite  and~\eqref{limsup2}
follows.

Next, we prove that \eqref{limsup1} holds.
In the setting of a symmetric Random walk Metropolis we have
$\alpha(x,y)=\min\left(1,\pi(x)/\pi(y)\right)$. Let
$\mathcal{A}_x:=\{y\in\RR^d\text{; }\pi(x)\leq\pi(y)\}$ and note  that 
$y\in\mathcal{A}_x$ if and only if  $\alpha(x,y)=1$ and $V_{\gamma}(x)\geq V_{\gamma}(y)$.
Recall $Z_n(x,y)=\big|\alpha(x,y)q^*(y-x)-\alpha(a^n(x),y)q^*(y-a^n(x))\big|$
and, for any $\mathcal{B}\subseteq \RR^d$ and $x\in\RR^d$, denote 
$$\mathcal{I}_n(x,\mathcal{B}):=
\delta_{\gamma,n}^{-2}\left(\int_{\mathcal{B}}\big(V_{\gamma}(x)+V_{\gamma}(y)\big)Z_n(x,y) dy\right)^2\text{.}$$
Condition \eqref{limsup1} is equivalent to 
$\limsup_{n\to\infty}\int_{\RR^d\setminus J^n_0}\mathcal{I}_n(x,\RR^d)\pi(x)dx<\infty$. With this in mind,  
we split the integral in 
$\mathcal{I}_n(x,\RR^d)$
into two integrals,
depending on 
which of the disjoint sets
$\mathcal{A}_x$
and 
$\mathcal{A}^c_x$
the point $y$
belongs to (for any $A\subset\RR^d$, $A^c$ denotes $\RR^d\setminus A$).

Note that it holds
$$\mathcal{I}_n(x,\RR^d)\leq 2\mathcal{I}_n(x,\mathcal{A}_x)+2\mathcal{I}_n(x,\mathcal{A}^c _x)\qquad \text{for all $x\in\RR^d$.}$$


For all sufficiently large $n$,
Lagrange's theorem, \eqref{eq:deltagamma} and~\eqref{pistvar2} imply that
\begin{equation}\label{eq:pizadeva}
\frac{|\pi(a^n(x))-\pi(x)|}{\delta_{\gamma,n}}\leq\frac{|\pi(a^n(x))-\pi(x)|}{|x-a^n(x)|}\leq|\nabla\pi(\tilde{x}^n)|\leq c_{\beta}\pi^{\beta}(x)\qquad\text{for all $x\in\RR^d\setminus J_0^n$.}
\end{equation}

The following holds for all $x,y\in\RR^d$:
\begin{equation}\label{eq:seenaocena}
Z_n(x,y)\leq \alpha(a^n(x),y)\big|q^*(y-a^n(x))-q^*(y-x)\big|+q^*(y-x)\big|\alpha(x,y)-\alpha(a^n(x),y)\big|.
\end{equation}

If $y\in \mathcal{A}_x$ and $n$ is large enough, then for every $x\in\RR^d\setminus J^n_0$, using \eqref{qstvar} and \eqref{eq:pizadeva}, the right hand side of \eqref{eq:seenaocena} can be further bounded as follows (note that $\pi(a^n(x))\geq\pi(y)\geq\pi(x)$ is crucial in the analysis of the right term):
\begin{eqnarray}
Z_n(x,y)&\leq&\delta_{\gamma,n}K_q^*(y-x)+q^*(y-x)\frac{|\pi(a^n(x))-\pi(y)|}{\pi(a^n(x))}1_{\{\pi(a^n(x))>\pi(y)\}}(x,y)\nonumber\\
&\leq&\delta_{\gamma,n}K_q^*(y-x)+\delta_{\gamma,n}c_{\beta}q^*(y-x)\pi^{\beta-1}(x)\nonumber.
\end{eqnarray} 
Since the Lebesgue measure is translation invariant, there exists a constant $c_Z>0$ such that for sufficiently large $n\in\NN$ we have
\begin{equation}\label{eq:celamera1}
\delta_{\gamma,n}^{-1} \int_{\mathcal{A}_x}Z_n(x,y)dy<c_Z\pi^{\beta-1}(x)
\qquad\text{for all $x\in\RR^d\setminus J_0^n$.}
\end{equation}
As $y\in\mathcal{A}_x$, we have $V_{\gamma}(x)\geq V_{\gamma}(y)$. Thus, \eqref{eq:celamera1} and $2\beta-2\gamma-1>0$ imply the following:
\begin{eqnarray}\label{eq:ocena1}
\int_{\RR^d\setminus J^n_0}\mathcal{I}_n(x,\mathcal{A}_x)\pi(x)dx&\leq&
\int_{\RR^d\setminus J^n_0}4V_{\gamma}(x)^2c_Z^2\pi^{2\beta-1}(x)dx\nonumber\\
&=&4c_{\gamma}c_Z^2\int_{\RR^d\setminus J^n_0}\pi^{2\beta- 2\gamma-1}(x)dx<\infty\text{.}
\end{eqnarray}

If $y\in\mathcal{A}^c_x$ and $n$ is large enough, then for every $x\in\RR^d\setminus J^n_0$, using \eqref{qstvar} and \eqref{eq:pizadeva}, we differently bound the right hand side of \eqref{eq:seenaocena} as follows:
\begin{eqnarray}\label{eq:ocenaAc}
Z_n(x,y)&\leq&\frac{\pi(y)}{\pi(a^n(x))}\delta_{\gamma,n}K_q^*(y-x)+q^*(y-x)\frac{\pi(y)}{\pi(a^n(x))}\frac{|\pi(a^n(x))-\pi(x)|}{\pi(x)}\nonumber\\
&\leq&\delta_{\gamma,n}\frac{\pi(y)}{\pi(a^n(x))}\left(K_q^*(y-x)+c_{\beta}q^*(y-x)\pi^{\beta-1}(x)\right)\nonumber\\
&\leq&\delta_{\gamma,n}c_{\pi}\frac{\pi(y)}{\pi(x)}\left(K_q^*(y-x)+c_{\beta}q^*(y-x)\pi^{\beta-1}(x)\right),
\end{eqnarray} 
where $c_{\pi}:=(1+\sup_{n\in\NN}\delta_{\gamma,n})^{1/\gamma}$ (note that $\sup_{n\in\NN}\sup_{x\in\RR^d}\frac{\pi(x)}{\pi(a^n(x))}<c_{\pi}$ by \eqref{eq:deltagamma}). 
Hence, similarly to $\eqref{eq:celamera1}$ there exists a constant $c'_Z>0$ such that
\begin{equation}\label{eq:celamera2}
\delta_{\gamma,n}^{-1} \int_{\mathcal{A}^c_x}Z_n(x,y)dy<c'_Z\pi^{\beta-1}(x)
\qquad\text{for all $x\in\RR^d\setminus J_0^n$.}
\end{equation}
Recall that $V_{\gamma}(y)\geq V_{\gamma}(x)$ for $y\in\mathcal{A}^c_x$ 
and apply the Cauchy-Schwarz inequality to obtain for each $x\in\RR^d\setminus J_0^n$ the bound:
\begin{eqnarray}\label{b2bound_one}
\nonumber
\mathcal{I}_n(x,\mathcal{A}^c_x)
&\leq& 4\delta_{\gamma,n}^{-2}
\int_{\mathcal{A}^c_x}Z_n(x,y) dy\cdot\int_{\mathcal{A}^c_x}V_{\gamma}(y)^2 Z_n(x,y)dy\\
&\leq&4c'_Zc_{\pi}\pi^{\beta-1}(x)\int_{\mathcal{A}^c_x}V_{\gamma}(y)^2\frac{\pi^{\beta}(y)}{\pi^{\beta}(x)}\left(c_{\beta}q^*(y-x)\pi^{\beta-1}(y)+K_q(y-x)\right)dy.
\end{eqnarray}
The second inequality follows by~\eqref{eq:ocenaAc}--\eqref{eq:celamera2} and the inequalities $\pi(y)/\pi(x)<1$ and $\pi(y)^{\beta-1}\geq \pi(x)^{\beta-1}$
for
$y\in\mathcal{A}^c_x$  (recall that $\beta\in(1/2,1)$). 
It is clear that if we substitute 
$\mathcal{A}^c_x$ with $\RR^d$
in~\eqref{b2bound_one}, the inequality remains true. 
Hence the Fubini theorem implies 
\begin{eqnarray}\label{b2bound}
&&\int_{\RR^d\setminus J^n_0} \mathcal{I}_n(x,\mathcal{A}^c_x)\pi(x)dx\nonumber\\
&\leq&4c'_Zc_{\pi} \int_{\RR^d}V_{\gamma}(y)^2\left(c_{\beta}\pi(y)^{\beta-1}\int_{\RR^d}q^*(y-x)dx +\int_{\RR^d}K_q(y-x)dx\right)\pi^{\beta}(y)dy\nonumber\\
&\leq&
4c'_Zc_{\pi}c^2_{\gamma}\left(c_{\beta}\int_{\RR^d}\pi^{2\beta-2\gamma-1}(y)dy+\int_{\RR^d}\pi^{\beta-2\gamma}(y)dy\int_{\RR^d}K_q(z)dz\right)<\infty\text{.}
\end{eqnarray}
Account, that $q^*$ is a density and 
note that assumptions $\gamma\in(0,\beta-1/2)$  and $\beta\in(1/2,1)$ imply
both $\beta-2\gamma,2\beta-2\gamma-1\in(0,1)$  making the integrals in~\eqref{b2bound} finite. This together with \eqref{eq:ocena1} implies the inequality $\limsup_{n\to\infty}\int_{\RR^d\setminus J^n_0}\mathcal{I}_n(x,\RR^d)\pi(x)dx<\infty$ and~\eqref{limsup1} follows.
\end{proof}

\section{Applications of the \hyperref[alg1]{\textbf{Scheme}}}
\label{section6}

Any implementation of the \hyperref[alg1]{\textbf{Scheme}} 
has to tackle the following two issues: 
(a) the stochastic matrix $p_{\XX}$ in step (I) of the~\hyperref[alg1]{\textbf{Scheme}}
cannot be computed analytically; (b) once the approximate solution $\tilde{F}_{\XX}$ has been
computed, the function $P\tilde{F}_{\XX}$, and thus the
control variate $P\tilde{F}_{\XX}-\tilde{F}_{\XX}$, 
are again not accessible in closed form. In Section~\ref{subsec:implementation} we present an implementation
of the~\hyperref[alg1]{\textbf{Scheme}}, feasible for general Metropolis-Hastings chains 
that addresses  these issues. In Section~\ref{subsec:Example}
we apply the method to the symmetric Random walk Metropolis chains with stationary distribution given by a double-well potential 
(i.e. a mixture of normals).
The examples below, satisfying our assumptions, 
are chosen because they are well-known to converge very slowly in the case of the classical ergodic estimator. 

Section~\ref{subsec:Example} illustrates two points.  First, Example~\ref{subsec:Example1} empirically confirms the arbitrary reduction of the asymptotic variance of the ergodic 
average in Theorem~\ref{thetheorem} as the partition of the state space is refined sufficiently.  
Furthermore, the numerical results indicate that the rate of convergence to zero of the asymptotic
variance is of the order specified in Theorem~\ref{rate}. 
Second, and perhaps more importantly for future practical applications, 
Example~\ref{subsec:Example2} demonstrates that an
asymptotic variance reduction can be achieved using a  coarse
partition with few states. This suggests that a similar approach of constructing control
variates could be used for reducing the variance of MCMC algorithms in real-world applications
and highlights the need for further research on how to efficiently construct weak approximations
to the chains of interest in higher dimensions.   

\subsection{Implementation}
\label{subsec:implementation}

Construct a partition $\{J_0,\ldots,J_m\}$  with properties:  
(1) the probability $\pi(J_0)$ is small; 
(2) it is easy to sample uniform random points from sets $J_j$ for $j\neq0$. 
Let $a_j\in J_j$, for $j>0$, be arbitrary and choose 
$a_0$ on the boundary of $J_0$.
One may choose
$J_0$ such that 
$\RR^d\setminus J_0$
contains  (most of) the simulated path of the chain. This works well in practice 
but does not guarantee~(1) and makes the partition dependent on the random output. 

Given the allotment 
$(X,\{J_0,\ldots,J_m\})$,
where
$X=\{a_0,\ldots,a_m\}$,
and the Metropolis-Hastings kernel~\eqref{kernelform},
we have the input required to 
construct the matrix 
$p_{\XX}$
(step (I) of the~\hyperref[alg1]{\textbf{Scheme}}). 
As the precise computation of its entries is not feasible in general, we  construct
an estimate 
$\hat p_{\XX}$ 
of $p_{\XX}$ via IID Monte Carlo.
With this in mind, 
let $i(x)$ be the unique index $i\in\{0,\ldots,m\}$,
such that $x\in J_{i(x)}$, 
and define a random function 
$\hat P : \RR^d\times X\to\RR_+$
by the formula
\begin{equation}
\label{eq:hat_P}
\hat{P}(x,a_j):=\begin{cases}
\frac{1}{n_1}\sum_{l_1=1}^{n_1}\Leb(J_j)\alpha(x,Y_{j,x}^{l_1})q(x,Y_{j,x}^{l_1}) &\text{if } j\notin \{0,i(x)\},\\
\frac{1}{n_2} \sum_{l_2=1}^{n_2}1_{J_0}(Z^{l_2}_x)\alpha(x,Z^{l_2}_x) &\text{if } j=0\neq i(x),\\
1-\sum_{k\in\{0,\ldots,m\}\setminus\{j\}} \hat{P}(x,a_k) & \text{if }i(x)=j,
\end{cases}
\end{equation}
where $n_1,n_2\in\NN$, 
random vectors 
$Y_{j,x}^{l_1}$, $l_1=1,\ldots, n_1$, are IID uniform in the set $J_j$
for any 
$j\in\{1,\ldots,m\}$ (subscript $x$ indicates that $Y_{j,x}^{l_1}$ are simulated at the point $x$ but does not influence the distribution) 
and $Z_x^{l_2}$, $l_2=1,\ldots,n_2$,
are IID random vectors, independent of all $Y_{j,x}^{l_1}$ and distributed according to the proposal distribution  
$q(x,z)dz$
in~\eqref{kernelform}.
We construct the matrix $\hat{p}_{\XX}$ with entries $(\hat{p}_{\XX})_{ij}:=\hat P(a_i,a_j)$ and use it in the~\hyperref[alg1]{\textbf{Scheme}} instead of $p_{\XX}$.


Given a function $F:\RR^d\to\RR$, we can execute steps (II)-(III) in the~\hyperref[alg1]{\textbf{Scheme}}.
Constructing the ergodic average estimator  
$S_k(F+P\tilde{F}_{\XX}-\tilde F_{\XX})$
requires the evaluation of 
the function 
$P\tilde{F}_{\XX}$
along the simulated path 
$(\Phi_i)_{i=1,\ldots,k}$
of the Metropolis-Hastings chain. We use the form of $\tilde F_{\XX}$
and the formula in~\eqref{eq:hat_P}
to define
\begin{equation}
\label{eq:hat_P_F}
\hat P\tilde F_{\XX}(x):=\sum_{j=0}^m(\hat{f}_{\XX})_j\hat{P}(x,a_j)
\end{equation}
for any
$x\in\RR^d$, where 
$\hat f_{\XX}$ is the solution of the system in step~(II) of
the~\hyperref[alg1]{\textbf{Scheme}} obtained by solving Poisson's
equation~\hyperref[PE]{PE$(\hat p_{\XX},f_{\XX})$}.
Moreover, the function $\hat P \tilde F_{\XX}$ is used in place of $P\tilde{F}_{\XX}$
along the entire path of the chain. Put differently, to estimate $\pi(F)$, we use a modified ergodic estimator $S_k(F+\hat{P}\tilde{F}_{\XX}-\tilde F_{\XX})$ instead of the original one $S_k(F+P\tilde{F}_{\XX}-\tilde F_{\XX})$. 

This choice of estimator can be justified as follows:
since $Y_{j,\Phi_i}^{l_1}$ and $Z_{\Phi_i}^{l_2}$,
generated at each time step $i$, in the construction of $\hat P\tilde F_{\XX}(\Phi_k)$ 
are independent of the past 
$(\Phi_j)_{j=1,\ldots,i-1}$,
we can construct a Markov chain $\hat \Phi$ with augmented state space $\RR^d\times (J_1)^{n_1}\times \cdots (J_m)^{n_1}\times (\RR^d)^{n_2}$, 
which keeps track of $\Phi_i$ and the auxiliary variables $Y_{j,\Phi_i}^{l_1}$ and $Z_{\Phi_i}^{l_2}$. It is
not hard to see that the chain $\hat \Phi$ has a unique invariant measure $\hat\pi$ 
satisfying $\hat\pi(F+\hat{P}\tilde{F}_{\XX}-\tilde F_{\XX})=\pi(F+P\tilde{F}_{\XX}-\tilde F_{\XX})=\pi(F)$. 
Furthermore, $\hat\Phi$ is
positive Harris recurrent and hence (by~\cite[Theorem~17.1.7]{tweedie})
the SLLN $S_k(F+\hat{P}\tilde{F}_{\XX}-\tilde F_{\XX})\xrightarrow{k\uparrow \infty}\pi(F)$ a.s. holds for any fixed $n_1,n_2\in\NN$. 

\begin{rema}
\label{rem:depen_path}
The estimator $S_k(F+\hat{P}\tilde{F}_{\XX}-\tilde F_{\XX})$ is unbiased in the
following sense: if the chain 
$\hat\Phi$ is started from
stationarity (i.e. $\hat\Phi_0\sim \hat\pi$) we have
$\text{E}_{\hat\pi}\left[S_k(F+\hat{P}\tilde{F}_{\XX}-\tilde F_{\XX})\right]=\pi(F)$ for any $k\in\NN$. This should
be contrasted with the general approach to variance reduction based on the
Poisson equation~\eqref{PE}, where 
the estimator $S_k(F)$ of $\pi(F)$
is essential in constructing a guess for the solution of~\eqref{PE} 
and hence the control variate itself
(see e.g.~\cite{petros} for this approach applied to random scan Gibbs samplers
and~\cite{Poisson_App_Derivative} for sufficiently smooth transition kernels).
The latter approach produces a consistent but biased estimator 
even if the chain is started in stationarity.
\end{rema}

In order to analyse numerically the level of improvement due to our
implementation of the~\hyperref[alg1]{\textbf{Scheme}},
denote
\begin{equation}
\label{eq:Factor}
r_{k,n}(\XX):=
\frac{\sum_{i=1}^n(S_k^i(F)-\pi(F))^2/n}{\sum_{i=1}^n(S_k^i(F+\hat P\tilde F_{\XX} - \tilde F_{\XX})-\pi(F))^2/n},
\end{equation}
where $n$ is the number of simulated paths of the chain (started in stationarity at independent starting points) and $k$ is the length of each path.
The random vectors 
$(S_k^i(F),S_k^i(F+\hat P\tilde F_{\XX} - \tilde F_{\XX}))$,
for $i=1,\ldots,n$,
are IID samples of the pair of ergodic average estimators $(S_k(F),S_k(F+\hat P\tilde F_{\XX} - \tilde F_{\XX}))$
evaluated on the simulated paths.
Put differently, $r_{k,n}$ is the ratio of mean square errors of estimators
$S_k(F)$ and $S_k(F+\hat P\tilde F_{\XX} - \tilde F_{\XX})$, numerically
evaluated on the same random collection of $n$ independent simulated paths
 and will serve as an estimate of the improvement. 

\subsection{Examples}
\label{subsec:Example}
In both examples we use 
the target law 
$\pi:=\rho N(\mu_1,\sigma^2_1)+(1-\rho) N(\mu_2,\sigma^2_2)$,
where $N(\cdot,\cdot)$ is a normal distribution of the appropriate dimension. 

\subsubsection{One dimensional double-well potential}
\label{subsec:Example1}

Let $\mu_1=-3$, $\sigma_1=1$, $\mu_2=4$, $\sigma_2=1/2$, $\rho= 2/5$.
The target density 
$\pi(\cdot)$
is a mixture of two normal densities with the modes at 
$-3$ and $4$ which takes values close to zero in the neighbourhood 
of the origin. 
Let $F(x):=x^3$ be the force function
and let the proposal density $q(x,\cdot)$ be 
$N(x,1)$. The assumptions of Theorem~\ref{thetheorem} are satisfied in this example. However, the estimator
$S_k(F)$
struggles to converge as the chain tends to get
``stuck'' under one of the modes for a long time, sampling values of
$F$ far away from $\pi(F)$.

Let the allotment $\XX_m$ be defined so that $J^m_0:=\RR\setminus(-8,7]$ and $J^m_j$ for $j=1,2,\dots,m$ are intervals of equal length partitioning $(-8,7]$. We take $a^m_j$ for $j>0$ to be the center of the interval $J^m_j$ and we take $a^m_0=-8$. 
We construct $\hat p_{\XX_m}$ by the formula in~\eqref{eq:hat_P} (using $n_1=n_2=1000$) and $\hat P\tilde{F}_{\XX_m}-\tilde{F}_{\XX_m}$ 
by the formulae in~\eqref{eq:hat_P}--\eqref{eq:hat_P_F} (using $n_1=1$, $n_2=10$) and then use \eqref{eq:Factor} to estimate the factor of improvement of the estimator 
$S_k(F+\hat P\tilde F_{\XX_m} - \tilde F_{\XX_m})$ in comparison to the estimator $S_k(F)$.

The table below shows the ratios of improvement $r_{n,k}(\XX_m)$ as the length
of the paths varies from $k=5\cdot 10^3$ to $2\cdot 10^5$ and the number of
intervals the set $(-8,7]$ is partitioned into varies from $m=30$ to $m=700$.
Each entry was computed using an independent sample of $n=1000$
independent paths of the chain started in stationarity.

\begin{center}
\begin{tabular}{|l||c|c|c|c|}
\hline
$m\setminus k$ & $k=5\cdot 10^3$ & $k=2\cdot 10^4$ & $k=5\cdot 10^4$ & $k=2\cdot 10^5$\\
\hline
\hline
\hline
$m=30$ & $5.93$ & $8.56$ & $9.37$ & $9.62$\\
\hline
$m=50$ & $18.0$ & $32.1$ & $34.2$ & $34.7$\\
\hline
$m=70$ & $39.1$ & $75.5$ & $96.8$ & $97.1$\\
\hline
$m=100$ & $76.9$ & $1.76\cdot 10^2$ & $2.22\cdot 10^2$ & $2.40\cdot 10^2$\\
\hline
$m=300$ & $6.96\cdot 10^2$ & $1.75\cdot 10^3$ & $2.13\cdot 10^3$ & $2.36\cdot 10^3$\\
\hline
$m=500$ & $2.14\cdot 10^3$ & $4.64 \cdot 10^3$ & $6.05\cdot 10^3$ & $6.92\cdot 10^3$\\
\hline
$m=700$ & $3.77 \cdot 10^3$ & $8.90\cdot 10^3$ & $1.16\cdot 10^4$ & $1.32\cdot 10^4$\\
\hline
\end{tabular}
\end{center}

The numerical results support Theorem~\ref{thetheorem} as they demonstrate that the
algorithm is capable of reducing the asymptotic variance arbitrarily. 
Note that the rate of the decay of the asymptotic variance (as the mesh of the allotment decreases)
in Theorem~\ref{rate} and Proposition~\ref{vrana} appears to coincide 
with the growth of the entries in the columns of the table (as $m$ increases).  
This suggests that the bound in Theorem~\ref{rate} (as a function of the mesh) is asymptotically sharp.


\subsubsection{Two dimensional double-well potential}
\label{subsec:Example2}

Let $\mu_1=(-3,0)$, $\sigma^2_1=I$, $\mu_2=(4,0)$, $\sigma^2_2=1/4\cdot I$,
$\rho= 3/5$ ($I$ is a two dimensional identity matrix). Let the force function
be $F(x,y):=x$ and let the proposal density $q(x,\cdot)$ be 
$N(x,I)$. Again, the assumptions of Theorem~\ref{thetheorem} are satisfied.

To specify the allotment, decompose $B:=(-7,6]\times (-4,4]$ into $6=3\times2$
equally sized rectangles and define them to be $J_1,J_2\dots J_{6}$. Take
$J_0:=\RR^2\setminus B$, $a_0:=(-7,0)$ and $a_j$ to be the center of the box
$J_j$ for $j>0$. Construct $\hat p_{\XX_m}$ by the formula in~\eqref{eq:hat_P}
(using $n_1=n_2=1000$) and $\hat P\tilde{F}_{\XX_m}-\tilde{F}_{\XX_m}$ by the
formulae in~\eqref{eq:hat_P}--\eqref{eq:hat_P_F} (using $n_1=1$, $n_2=10$) and
estimate the factor of improvement $r_{k,n}$ in~\eqref{eq:Factor}. 
We obtain approximately a $10\%$ reduction in variance. More precisely we get
$$
\text{$r_{k,n}=1.09$ (resp. $1.08$) for the path of length $k=2\cdot 10^5$ (resp. $k=5\cdot 10^4$),}
$$
where 
$n=1000$
sample paths were used. 
Moreover, $\pi_{\XX}(f_{\XX})$ is a poor estimator of $\pi(F)$ as  
$\big(\pi_{\XX}(f_{\XX})-\pi(F)\big)^2=1.52$, while 
the mean square error of $S_{2\cdot 10^5}(F+\hat P\tilde{F}_{\XX}-\tilde{F}_{\XX})$ is $0.85$.

This indicates that a very fine discretisation need not be necessary to
achieve variance reduction of MCMC estimators. Analogous implementations,
using for example partitions of the state space based on $F$ and $\pi$, might 
lead to variance reduction in higher dimensional models.



\appendix
\section{Existence of exhaustive allotments}
\label{appendix}

\begin{prop}
Let $W\colon \RR^d\to [1,\infty)$ be a continuous function with bounded sublevel sets, i.e. for every $c\in\RR$ the pre-image $W^{-1}((-\infty,c])$ is bounded. Then an exhaustive sequence of allotments with respect to $W$ exists.
\end{prop}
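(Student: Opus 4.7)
The plan is to realize each allotment $\XX_n$ as the truncation of a fine uniform grid on $\RR^d$ to a large ball, with the complement of that ball playing the role of $J_0^n$. Continuity of $W$ together with the bounded-sublevel-set hypothesis gives $\widetilde W(R):=\inf\{W(y):|y|\geq R\}\to\infty$ as $R\to\infty$. I would pick any sequence $\rho_n\uparrow\infty$ and any $\epsilon_n\downarrow 0$ (to be tuned below), let $\{Q_k\}_{k\in\ZZ^d}$ denote the standard half-open partition of $\RR^d$ into cubes of side $\epsilon_n$, enumerate as $Q_1^n,\ldots,Q_{m_n}^n$ the finitely many cubes meeting $\overline{B(0,\rho_n)}$, and set
\begin{equation*}
J_j^n:=Q_j^n \quad (j=1,\ldots,m_n),\qquad J_0^n:=\RR^d\setminus\bigcup_{j=1}^{m_n}Q_j^n.
\end{equation*}
Each $J_j^n$ has strictly positive Lebesgue measure, $\bigcup_{j\geq 1}J_j^n\subseteq B(0,\rho_n+\sqrt{d}\,\epsilon_n)$ is bounded, and $J_0^n\subseteq\RR^d\setminus\overline{B(0,\rho_n)}$. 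As representatives I take $a_j^n$ to be the center of $Q_j^n$ for $j\geq 1$ and, since $W$ need not attain its infimum on the open set $J_0^n$, any $a_0^n\in J_0^n$ with $W(a_0^n)\leq\inf_{J_0^n}W+1$.

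With this setup the radius half of exhaustiveness is immediate: $\text{rad}(\XX_n,W)=\inf_{J_0^n}W\geq\widetilde W(\rho_n)\to\infty$. For the $W$-mesh, the geometric part reduces to $\max_{1\leq j\leq m_n}\sup_{y\in J_j^n}|y-a_j^n|\leq(\sqrt{d}/2)\epsilon_n\to 0$. The multiplicative part requires tuning $\epsilon_n$ to $\rho_n$: the restriction of $W$ to the compact ball $\overline{B(0,\rho_n+1)}$ is uniformly continuous with some modulus $\omega_n$ satisfying $\omega_n(t)\to 0$ as $t\to 0^+$, and I would shrink $\epsilon_n$ further so that $\epsilon_n<1/n$ and $\omega_n(\sqrt{d}\,\epsilon_n)<1/n$. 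Because $W\geq 1$, for $j\geq 1$ and $y\in J_j^n$ we then have $W(a_j^n)/W(y)-1\leq|W(a_j^n)-W(y)|<1/n$, while for $j=0$ the choice of $a_0^n$ together with $W\geq\widetilde W(\rho_n)$ on $J_0^n$ yields $W(a_0^n)/W(y)-1\leq 1/\widetilde W(\rho_n)\to 0$. Combining these estimates gives $\delta(\XX_n,W)\to 0$, so $(\XX_n)_{n\in\NN}$ is exhaustive with respect to $W$.

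The only delicate point I anticipate is that $W$ is typically not uniformly continuous on all of $\RR^d$, so no single $\epsilon$ works simultaneously for all $n$; this is circumvented by choosing $\epsilon_n$ only after $\rho_n$ has been fixed, invoking uniform continuity on the compact ball relevant at stage $n$. Everything else is routine bookkeeping about the grid partition.
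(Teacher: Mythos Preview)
Your proposal is correct and follows essentially the same strategy as the paper: cover a large bounded region by a fine grid of cubes, set $J_0^n$ to be the complement, and invoke uniform continuity of $W$ on the relevant compact set (with $\epsilon_n$ chosen after the region is fixed) to control the multiplicative part of the mesh. The paper uses the sublevel sets $W^{-1}((-\infty,r_n))$ rather than balls $B(0,\rho_n)$, and arranges $J_0^n$ to be closed so that $a_0^n$ can be chosen to realise $\inf_{J_0^n}W$ exactly; your use of balls and an approximate minimiser $W(a_0^n)\leq\inf_{J_0^n}W+1$ combined with $\widetilde W(\rho_n)\to\infty$ is an equally clean alternative. One cosmetic slip: with half-open cubes the set $J_0^n$ is in general neither open nor closed, so calling it ``the open set $J_0^n$'' is inaccurate, but this does not affect your argument since you never use openness.
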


\begin{proof}
Let $(r_n)_{n\in\NN}$ be an increasing unbounded sequence of positive numbers, such that $r_1>\inf_{x\in\RR^d} W(x)$. For each $n\in\NN$ define sets $L_n:=W^{-1}\big((-\infty,r_n)\big)$,
$$\tilde{L}_n:=\{x\in\RR^d; \exists y\in L_n \text{, such that } |x-y|<\sqrt{d}\}\text{.}$$
Set $\tilde{L}_n$ is bounded and non-empty by definitions of $W$ and $r_n$. 
So, $W$ is uniformly continuous on $\tilde{L}_n$. There exists a positive sequence $(\epsilon_n)_{n\in\NN}$ (satisfying $\lim_{n\to\infty}\epsilon_n=0$ and $\sup_{n\in\NN}\epsilon_n<1$) such that $|x-y|<\epsilon_n \sqrt{d}$ implies $|W(x)-W(y)|<\frac{1}{n}$ for each $n\in\NN$ and all $x,y\in \tilde{L}_n$.

Fix $n\in\NN$. For $x=(x_1,x_2,\dots,x_d)\in\RR^d$ denote $K^n_x:=[x_1,x_1+\epsilon_n)\times\dots\times[x_d,x_d+\epsilon_n)$. Clearly, it is possible to pick $x^1,x^2,\dots x^{m_n}\in\RR^d$ so that sets $K^n_j:=K^n_{x^j}$ (for $1\leq j\leq m_n$) are disjoint and cover $L_n$ (assume the cover is minimal). Finally, take $J^n_0$ to be the closure of $\RR\setminus\bigcup _{j=1}^{m_n}K^n_j$ and define $J^n_j:=K^n_j\setminus J^n_0$.
Note that $\Leb(J_j^n)>0$ for all $0\leq j\leq m_n$. For $1\leq j\leq m_n$ pick arbitrary $a_j^n\in J^n_j$ and choose $a_0\in J^n_0$, so that $W(a_0^n)=\inf_{x\in J^n_0}W(x)$ (possible since $W$ has bounded sublevel sets and $J^n_0$ is closed). Sets $J^n_j$ together with representatives $a_j^n$ define an allotment $\XX_n$.

By Pythagoras theorem $|x-y|<\epsilon_n\sqrt{d}$, for $x,y$ from the same $\in J_j^n$. Since $\epsilon_n<1$ and $K^n_j\cap L_n\neq \emptyset$, we get $J^n_j\subset K^n_j\subset \tilde{L}_n$ for all $1\leq j\leq m_n$. Hence, 
$$\max_{1\leq j\leq m_n}\sup_{y\in J_j^n}|y-a^n_j|\leq\epsilon_n\sqrt{d}$$ 
and by uniform continuity (recall $W\geq 1$)
$$\max_{0\leq j\leq m_n}\sup_{y\in J_j^n}\frac{W(a^n_j)-W(y)}{W(y)}\leq \frac{1}{n}\text{.}$$

Doing the above for every $n\in\NN$ shows $\lim_{n\to\infty} \delta(\XX_n,W)=0$ (by \eqref{meshdef}). By \eqref{radiusdef} and definition of $L_n$, $\text{rad}(\XX_n,W)\geq r_n$ for every $n\in\NN$. So, $\lim_{n\to\infty}\text{rad}(\XX_n,W)=\infty$.
\end{proof}

\bibliographystyle{alpha}
\bibliography{Bibliography}

\end{document}